\documentclass[11pt]{amsart}
\usepackage{amsmath,amssymb,amsfonts,amsthm,amsopn}
\usepackage{graphics}


\setlength{\textheight}{20,6cm}

\setlength{\oddsidemargin}{0pt}
\setlength{\evensidemargin}{0pt}
\setlength{\textwidth}{144 mm}   

 \def\Spnr{Sp(d,\R)}

 \newcommand\mc[1]{{\mathcal{#1}}}


\newcommand{\stft}{short-time Fourier transform}

\newcommand{\fif}{if and only if}

\newtheorem{theorem}{Theorem}[section]
\newtheorem{lemma}[theorem]{Lemma}
\newtheorem{corollary}[theorem]{Corollary}
\newtheorem{proposition}[theorem]{Proposition}
\newtheorem{definition}[theorem]{Definition}

\newtheorem{remark}[theorem]{Remark}

\newcommand{\beqa}{\begin{eqnarray*}}
\newcommand{\eeqa}{\end{eqnarray*}}

\newcommand{\field}[1]{\mathbb{#1}}
\newcommand{\bR}{\field{R}}        
\newcommand{\bN}{\field{N}}        
\newcommand{\bZ}{\field{Z}}        
\newcommand{\bC}{\field{C}}        
        %
        %



\def\la{\lambda}

\def\dim{\mathrm{dim}}
\def\cF{\mathcal{F}}              
\def\cS{\mathcal{S}}

\def\cU{\mathcal{U}}
\def\cA{\mathcal{A}}

\def\a{\aleph}

\def\rd{\bR^d}

\def\rdd{{\bR^{2d}}}

\def\lrd{L^2(\rd)}

\def\intrd{\int_{\rd}}

\def\R{\right)}

\def\<{\left<}
\def\>{\right>}

\def\inv{^{-1}}

\def\mv1{M_v^1}

\def\phas{(x,\xi )}
\def\mn{(m,n)}
\def\mn'{(m',n')}

\def\Spnr{Sp(d,\R)}

\hyphenation{Cara-theo-do-ry}
\hyphenation{Dau-be-chies}
\hyphenation{Barg-mann}
\hyphenation{dis-tri-bu-ti-ons}
\hyphenation{pseu-do-dif-fe-ren-tial}
\hyphenation{ortho-normal}



\def\o{\xi}
\def\a{\alpha}

\def\N{\mathbb{N}}
\def\R{\mathbb{R}}
\def\Ren{\mathbb{R}^d}
\def\Renn{\mathbb{R}^{2d}}

\def\sch{\mathcal{S}}

\def\H{{\mathbb H}}

\def\f{\varphi}

\def\Sn2{S_{2}(L^{2}(\Ren))}
\def\S1{S_{1}(L^{2}(\Ren))}
\def\sig00{\sigma_{0,0}}

\def\la{\langle}
\def\ra{\rangle}




\newcommand{\A}{\mathcal{A}}

\begin{document}
\begin{abstract} This article gives explicit integral formulas for the so-called generalized metaplectic operators, i.e.  Fourier integral operators (FIOs) of Schr\"odinger type, having a symplectic matrix as their canonical transformation. These integrals are over specific linear subspaces of $\rd$, related to the $d\times d$ upper left-hand side submatrix of the underlying $2d\times 2d$ symplectic matrix. The arguments use the integral representations for the classical metaplectic operators obtained by Morsche and Oonincx in a previous paper, algebraic properties of symplectic matrices and time-frequency tools. As an application, we give a specific integral representation for solutions of the Cauchy problem of Schr\"odinger equations with bounded perturbations for every instant time $t\in \R$, even at the (so-called) caustic points.
\end{abstract}

\title[Integral Representations for the Class of Generalized Metaplectic Operators]{Integral Representations for the Class of Generalized Metaplectic Operators}

\author{Elena Cordero, Fabio Nicola  and Luigi Rodino}
\address{Department of Mathematics,
University of Torino, via Carlo Alberto 10, 10123 Torino, Italy}
\address{Dipartimento di Matematica,
Politecnico di Torino, corso Duca degli Abruzzi 24, 10129 Torino,
Italy}
\address{Department of Mathematics,
University of Torino, via Carlo Alberto 10, 10123 Torino, Italy}

\email{elena.cordero@unito.it}
\email{fabio.nicola@polito.it}
\email{luigi.rodino@unito.it}

\subjclass[2010]{42A38,47G30,42B10}
\keywords{Fourier Integral operators, metaplectic operators, modulation spaces,
 Wigner distribution, short-time Fourier
  transform,  Schr\"odinger equation}

\title[Integral Representations for the Class of Generalized Metaplectic Operators]{Integral Representations for the Class of Generalized Metaplectic Operators}
\maketitle\section{Introduction}
The objective of this study is to find  integral representations for generalized metaplectic operators. Starting from the original idea of extending the usual metaplectic
representation of the symplectic group using a certain class of Fourier integral
operators in Weinstein~\cite{Wein85}, these operators were introduced in \cite{Wiener12}  as examples of  Wiener algebras of Fourier integral operators of Schr\"odinger type (cf. \cite{fio5,fio1,fio3,B18} and the extensive references therein) having  symplectic matrices as canonical transformations. They appear for instance in quantum mechanics, as propagators for solutions to Cauchy problems for  Schr\"odinger equations with bounded perturbations \cite{EFsurvey,MetapWiener13,wavefrontsetshubin13}.  In the work \cite{Wiener12}  generalized metaplectic operators turns out  to be the composition of classical metaplectic operators with pseudodifferential operators with symbols in suitable classes of modulation spaces. Classical metaplectic operators, which are unitary operators on $L^2(\rd)$,  arise as intertwining operators for the Schr\"{o}dinger representation (see the next section for details). \par
Explicit integral representations for classical metaplectic operators,  extending the results already contained in the literature \cite{folland89,Fred77,GS1,GS2,Norbert1999}, were given by Morsche and Oonincx in \cite{MO2002} and applied to energy localization problems and to fractional Fourier transforms in \cite{MO1999}, see also \cite{ALAA94,Gos11,FN,Namias80} and the references therein. The novelty of \cite{MO2002}, with respect to the classical works  \cite{Gos11,folland89}, is the explicit integral representation of  metaplectic operators, covering all possible cases of symplectic matrices. Indeed, the integral representation of metaplectic operators in \cite{Gos11,folland89} covers only the cases of non-singular upper-left or upper-right component of the parameterizing matrix.  This work can be considered as a completion   of the study \cite{Wiener12}, since  integral representations of generalized metaplectic operators are given for all possible cases of symplectic matrices parameterizing the phase function.\par
To make it easier to compare the results obtained in \cite{MO2002} and in this paper we use the same definition of Schr\"{o}dinger representation and symplectic group given in \cite{MO2002}; these definitions are not the same as in \cite{folland89,Wiener12}: to compare these results with the latter works, a symplectic matrix $\A$ must be replaced with its transpose $\A^T$. \par
The symplectic group $\Spnr$ is the subgroup of $2d\times 2d$ invertible matrices $GL(2d,\bR)$,   defined by
\begin{equation}\label{sympM}
\Spnr=\left\{\A\in GL(2d,\R):\;\A J \A^T=J\right\},
\end{equation}
where $J$ is the orthogonal matrix
$$
J=\begin{pmatrix} 0_d&I_d\\-I_d&0_d\end{pmatrix},
$$
(here $I_d$, $0_d$ are  the $d\times d$ identity matrix and null matrix, respectively).
Observe that if $\A$ satisfies \eqref{sympM}, then also the transpose $\A^T$ and the inverse $\A^{-1}$  fulfill \eqref{sympM} and so are symplectic matrices as well. 
Writing $\A\in \Spnr$ in the following $d\times d$ block decomposition:
\begin{equation}\label{block}
\A=\begin{pmatrix} A&B\\C&D\end{pmatrix},
\end{equation}
  Morsche and Oonincx in \cite[Theorem 1]{MO2002} represented a metaplectic operator by using $r$-dimensional integrals, were $r=\dim\, R(B)\in \bN$, $0\leq r\leq d$, is the range of the $d\times d$ block $B$.
Their result is the starting point for our representation formula for  generalized metaplectic operators. \par For a phase-space point   $z=\phas\in\rdd$ and a function $f$ defined on $\rd$,
we call a   time-frequency shift (or phase-space
shift) the operator  $$\pi(z)f(t)=M_\xi T_x f(t)= e^{2\pi i t\cdot \xi} f(t-x),$$
(that is, the composition of the  modulation operator $M_\xi$ with the translation $T_x$). The definition of a generalized metaplectic operator $T$ is based on its kernel decay with respect to the set of phase-space shifts $\pi(z)g$, $z\in\rdd$, for a given window function $g$ in the Schwartz class $\cS(\rd)$. The decay is measured using the smooth polynomial weight $\la z\ra=(1+|z|^2)^{1/2}$, $z\in\rdd$.
\begin{definition}\label{shsh} Consider $\A \in Sp(d,\bR )$, $g\in\cS(\rd)$ and $s\geq 0$.
  A linear operator $T:\cS(\rd)\to\cS'(\rd)$ is a generalized metaplectic operator (in short,  $T\in FIO(\A,s)$) if its kernel
satisfies the decay condition
\begin{equation}\label{asterisco}
|\langle T \pi(z) g,\pi(w)g\rangle|\leq {C}\langle w-\cA
z\rangle^{-s},\qquad   w,z\in\rdd.
\end{equation}
\end{definition}
The union $\bigcup _{\cA \in Sp(d,\bR )} FIO(\cA ,s)$ is called the
class of generalized metaplectic operators and denoted by
$FIO(Sp,s)$.
Simple examples of generalized metaplectic operators are provided by the classical metaplectic operators $\mu(\A)$, $\A\in Sp(d,\bR )$, where $\mu$ is the metaplectic representation recalled below, which (according to our notation) satisfy $\mu(\A)\in\cap_{s\geq 0} FIO(\A^T,s)$ (cf. \cite[Proposition 5.3]{Wiener12}). More interesting examples are provided by composing  classical metaplectic operators with  pseudodifferential operators. A pseudodifferential operator (in the Weyl form) with a symbol $\sigma$ is formally defined as
\begin{equation}\label{Weylform}
\sigma^w(x,D) f(x)=\int_{\rdd} e^{2\pi i(x-y)\cdot \xi} \sigma\Big(\frac{x+y}{2},\xi\Big) f(y) dy\, d\xi.
\end{equation}
We focus on symbols in sub-classes of the Sj\"ostrand class (or modulation space) $M^{\infty,1}(\rdd)$. This  class is a special case of modulation spaces, introduced and studied by Feichtinger in \cite{F1} and later redefined and used to prove the Wiener property for pseudodifferential operators by Sj\"ostrand in \cite{wiener30,wiener31}. The space $M^{\infty,1}(\rdd)$ consists of all continuous functions $\sigma$ on $\rdd$
 whose  norm, with respect to a fixed window $g\in\cS(\rdd)$, satisfies
\begin{equation}\label{otto}
\|\sigma \|_{M^{\infty,1} } = \int_{\rdd}\sup_{z\in\rdd}|\langle \sigma,\pi(z,\zeta)g\rangle |\, d\zeta<\infty.
\end{equation}
Note that in the space $M^{\infty,1}$   even the differentiability property can be lost.
The  scale of modulation spaces under our consideration are denoted by $M^{\infty}_{1\otimes v_s}(\rdd)$, $s\in\R$. They are Banach spaces  of tempered distributions $\sigma\in \cS'(\rdd)$ such that their norm
\begin{equation}
\|\sigma \|_{M^{\infty}_{1\otimes v_s} }=\sup_{z,\zeta\in\rdd}|\langle \sigma ,\pi(z,\zeta )g\rangle|v_s(\zeta)<\infty,
\end{equation}
 where $v_s(\zeta)=\langle \zeta
\rangle^{s}$ (it can be shown that their definition does not depend on the choice of the window $g\in\cS(\rdd)$).  For $s>2d$, they turn out to be spaces of continuous functions contained in the
Sj\"ostrand class  $M^{\infty,1}(\rdd)$. The regularity of the class $M^{\infty}_{1\otimes v_s}(\rdd)$ increases  with the parameter $s$. In particular, $\bigcap_{s>2d} M^{\infty}_{1\otimes v_s}(\rdd)=S^{0}_{0,0}$, the  H\"ormander's class of smooth functions on
$\rdd$ satisfying, for every $\alpha\in \N^{2d}$, $$
|\partial^{\alpha}_{z}\sigma(z)|\leq C_{\alpha},\quad \, z\in\rdd,
$$
for a suitable  $C_\a>0$. \par
In the works \cite{Wiener12, EFsurvey} is proved the following characterization for generalized metaplectic operators:
\begin{theorem} \label{tc2}
(i) An operator $T$ is in $FIO(\A , s)$  \fif\
 there exist  symbols  $\sigma_1$ and $\sigma _2 \in M^{\infty}_{1\otimes v_s}(\rdd)$ such that
 \begin{equation}
   \label{eq:kh23}
T=\sigma_1^w(x,D)\mu(\A) =\mu(\A)\sigma_2^w(x,D).
 \end{equation}
(ii) Let $
\A\in Sp(d,\R) $ be a symplectic
matrix with block decomposition \eqref{block} and such that $ \det A\not=0$. Define
the phase function $\Phi $ as
\begin{equation}\label{fase}\Phi(x,\xi)=\frac12 CA^{-1}x\cdot x+
 A^{-1} x\cdot \xi-\frac12  A^{-1}B\xi\cdot \xi.
\end{equation}
Then $T \in FIO(\A,s)$ \fif\ $T$ can be written as a  type I
Fourier integral operator (FIO), that is an operator in the form
\begin{equation}
  \label{eq:kh22}
Tf(x)=\int_{\rd} e^{2\pi i\Phi(x,\xi)}\sigma(x,\xi)\hat{f}(\xi)\, d\xi,
\end{equation}
with symbol $\sigma\in M^{\infty}_{1\otimes v_s}(\rdd)$.
\end{theorem}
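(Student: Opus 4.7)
The plan is to reduce both parts of the statement to Gröchenig's Sjöstrand-type characterization of Weyl pseudodifferential operators, which asserts that $P=\sigma^{w}(x,D)$ with $\sigma\in M^{\infty}_{1\otimes v_{s}}(\rdd)$ if and only if the kernel of $P$ has the off-diagonal decay
$$
|\langle P\pi(z)g,\pi(w)g\rangle|\leq C\langle w-z\rangle^{-s},\qquad z,w\in\rdd.
$$
The bridge between this pseudodifferential picture and the $FIO(\A,s)$ condition \eqref{asterisco} is the metaplectic covariance of time-frequency shifts,
$$
\mu(\A)\pi(z)=e^{2\pi i\phi_{\A}(z)}\pi(\A z)\mu(\A),
$$
which transports the twisted decay \eqref{asterisco} into an ordinary Sjöstrand-type decay once one composes $T$ with $\mu(\A)^{\pm 1}$.

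For part (i), given $T\in FIO(\A,s)$, I would set $P:=T\mu(\A)^{-1}$ and $\tilde g:=\mu(\A)^{-1}g\in\cS(\rd)$. Covariance then yields
$$
\langle P\pi(z)g,\pi(w)g\rangle=e^{-2\pi i\phi_{\A}(\A^{-1}z)}\langle T\pi(\A^{-1}z)\tilde g,\pi(w)g\rangle,
$$
and \eqref{asterisco} gives $|\langle P\pi(z)g,\pi(w)g\rangle|\lesssim\langle w-z\rangle^{-s}$, so $P=\sigma_{1}^{w}(x,D)$ with $\sigma_{1}\in M^{\infty}_{1\otimes v_{s}}$, hence $T=\sigma_{1}^{w}(x,D)\mu(\A)$. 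The other factorization $T=\mu(\A)\sigma_{2}^{w}(x,D)$ comes from the symmetric argument applied to $Q:=\mu(\A)^{-1}T$. The converse implication is direct: if either factorization holds, covariance and the Sjöstrand-type decay of $\sigma_{j}$ reproduce \eqref{asterisco}.

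Part (ii) then builds on (i) by inserting the explicit integral representation of $\mu(\A)$ available when $\det A\neq 0$, namely $\mu(\A)f(x)=c_{\A}\int_{\rd}e^{2\pi i\Phi(x,\xi)}\hat f(\xi)\,d\xi$ with $\Phi$ as in \eqref{fase} (the Morsche--Oonincx formula in the non-singular upper-left case). Substituting this into $T=\sigma_{1}^{w}(x,D)\mu(\A)$ and computing the Weyl composition rearranges $T$ into the type I form \eqref{eq:kh22}, with a new symbol $\sigma$ obtained from $\sigma_{1}$ through a phase-space change of variables induced by $\A$. The converse is obtained by reading off the canonical transformation from the derivatives of $\Phi$ and factoring the oscillatory integral as $\mu(\A)$ followed by a pseudodifferential remainder, reducing to (i).

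The main technical obstacle is showing that $M^{\infty}_{1\otimes v_{s}}(\rdd)$ is closed, with uniform norm control, under the affine symplectic change of variables implementing the canonical transformation $\A$ at the symbol level; this is what allows one to pass between $\sigma_{1}$ in the factorization and $\sigma$ in the type I FIO form. The key ingredient is the metaplectic covariance of the short-time Fourier transform, i.e.\ that $V_{g}\circ\mu(\A)$ equals $V_{\mu(\A)^{-1}g}$ composed with a symplectic substitution on $\rdd$, together with the invariance of the weight $v_{s}(\zeta)=\langle\zeta\rangle^{s}$ under such linear substitutions, which together transfer the decay in \eqref{otto} through the composition.
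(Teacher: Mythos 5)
Your proposal follows essentially the route the paper relies on: Theorem \ref{tc2} is not proved in this paper but imported from \cite{Wiener12,EFsurvey}, and the argument there is exactly your combination of Gr\"ochenig's almost-diagonalization characterization of Weyl operators with symbols in $M^{\infty}_{1\otimes v_s}(\rdd)$ with the symplectic covariance of time-frequency shifts for part (i), while part (ii) rests on the Fresnel-type integral formula for $\mu(\A)$ in the nonsingular case --- the same mechanism the paper itself deploys in Section \ref{formula} (Lemma \ref{lkh1}, Lemma \ref{invM}, Corollary \ref{mainC}(ii)). Two ingredients you leave implicit would need to be made explicit in a complete write-up: the independence of the decay condition \eqref{asterisco} (and of the almost-diagonalization characterization) from the choice of window, since your computation replaces $g$ by $\mu(\A)^{-1}g$ in one slot; and, in part (ii), the passage from the Weyl to the Kohn--Nirenberg quantization (the operator $\mathcal{U}$ of Lemma \ref{invM}), without which ``computing the Weyl composition'' does not directly collapse the composition into the type I form \eqref{eq:kh22}. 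Both are standard facts and do not affect the soundness of the approach.
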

\noindent Integral formulas of the type \eqref{eq:kh22}  are also called Fresnel's formulas \cite{GS1}.\par
The main objective of this paper is to find an  integral representation of the type \eqref{eq:kh22} also when the block $A$ is singular. The $d$-dimensional integral in \eqref{eq:kh22} will be split up into two integrals: an $r$-dimensional integral on the range $R(A)$ of the block $A$, where $r=\dim\,R(A)$, the dimension of the linear space $R(A)$, and  a $(d-r)$-dimensional integral on the kernel $N(A)$ of the block $A$ (observe that $\dim\,N(A)=d-r$).
Let us denote by $\cF_{R(A)}$ the partial Fourier transform of a function $f\in L^1(\rd)$ with respect to the linear space $R(A)$; that is, for $x=x_1+x_2,$ $\xi=\xi_1+\xi_2\in R(A)\oplus N(A^T)$,
\begin{equation}\label{FRA}
\cF_{R(A)}f(\xi)=\int_{R(A)} e^{- 2\pi i x_1\cdot \xi_1}f(x_1+x_2)\,dx_1\quad \xi_1\in R(A).
\end{equation}
Since the $d\times d$ block $A: R(A^T)\to R(A)$ is an isomorphism, we denote by $A^{inv}: R(A)\to R(A^T)$ the pseudo-inverse of $A$. We first show this preliminary result for symplectic matrices.
\begin{lemma}\label{pseudo-inverse} Consider $\A\in\Spnr$ with the $2\times 2$ block decomposition in \eqref{block}. Then the $d\times d$ block $B$ is an isomorphism from $N(A)$ onto $N(A^T)$.
\end{lemma}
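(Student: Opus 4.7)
The plan is to decompose the claim into three parts: (i) $B$ maps $N(A)$ into $N(A^T)$; (ii) the restriction $B|_{N(A)}$ is injective; and (iii) $\dim N(A) = \dim N(A^T)$. Together these deliver a bijective linear map and hence the asserted isomorphism. Part (iii) is immediate, since $\operatorname{rank}(A) = \operatorname{rank}(A^T)$ forces both null spaces to have dimension $d - \operatorname{rank}(A)$.

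For (ii) I would exploit the invariance of the standard symplectic form $\omega(u, v) = u^T J v$: the condition $\cA J \cA^T = J$ is equivalent to $\omega(\cA u, \cA v) = \omega(u,v)$ for all $u, v$. If $x \in N(A) \cap N(B)$, then
\[
\cA \begin{pmatrix} x \\ 0 \end{pmatrix} = \begin{pmatrix} 0 \\ Cx \end{pmatrix}, \qquad \cA \begin{pmatrix} 0 \\ x \end{pmatrix} = \begin{pmatrix} 0 \\ Dx \end{pmatrix},
\]
so both images lie in the Lagrangian subspace $\{0\} \times \bR^d$, on which $\omega$ is identically zero. But $\omega((x,0),(0,x)) = |x|^2$; invariance of $\omega$ under $\cA$ then forces $|x|^2 = 0$, hence $x = 0$. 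Thus $N(A) \cap N(B) = \{0\}$, so $B$ is injective on $N(A)$.

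For (i) I would expand both $\cA J \cA^T = J$ and $\cA^T J \cA = J$ in the block form \eqref{block}, producing the standard symplectic identities $AB^T = BA^T$, $CD^T = DC^T$, $AD^T - BC^T = I_d$, together with $A^T C = C^T A$, $B^T D = D^T B$ and $A^T D - C^T B = I_d$. Starting from $x \in N(A)$, these identities must be combined to conclude $A^T B x = 0$, i.e.\ $Bx \in N(A^T)$.

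Step (i) is the main obstacle: whereas (ii) and (iii) are routine once the symplectic structure and the rank-nullity theorem are in place, step (i) requires pinpointing the precise combination of the symplectic identities that compels the block $B$ to respect both null spaces at once. Once this mapping property is secured, combining all three parts produces the claimed isomorphism $B:N(A)\to N(A^T)$.
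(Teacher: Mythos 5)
Your decomposition into (i) the mapping property $B(N(A))\subseteq N(A^T)$, (ii) injectivity on $N(A)$, and (iii) equality of dimensions is the right skeleton, and parts (ii) and (iii) are correct. Indeed (iii) is immediate from $\operatorname{rank}A=\operatorname{rank}A^T$, which is cleaner than the paper's route (the paper obtains the equality of dimensions by running an injectivity argument for both $\A$ and $\A^T$); and your symplectic-form proof that $N(A)\cap N(B)=\{0\}$ is a valid alternative to the paper's one-line computation, which reads off from \eqref{e1} that $-B^TCx=x$ for $x\in N(A)$, whence $N(A)\subset R(B^T)=N(B)^{\perp}$. The problem is step (i): you acknowledge it as ``the main obstacle'' and assert that the block identities ``must be combined to conclude $A^TBx=0$,'' but you never exhibit the combination. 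Since (i) is the entire substance of the lemma, what you have is a plan, not a proof. (For comparison, the paper does not derive (i) from the raw block identities either: it invokes the structural relations \eqref{M4} and \eqref{M6} of Proposition \ref{P1}, imported from Morsche--Oonincx.)

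More seriously, the combination you are looking for does not exist: the inclusion $B(N(A))\subseteq N(A^T)$ is \emph{not} a formal consequence of $\A J\A^T=J$, i.e.\ of $AB^T=BA^T$, $CD^T=DC^T$, $AD^T-BC^T=I_d$. Take $d=2$ and
\[
\A=\begin{pmatrix} 1&0&0&1\\0&0&0&1\\0&0&1&0\\0&-1&-1&0\end{pmatrix},\qquad
A=\begin{pmatrix}1&0\\0&0\end{pmatrix},\quad B=\begin{pmatrix}0&1\\0&1\end{pmatrix},\quad C=\begin{pmatrix}0&0\\0&-1\end{pmatrix},\quad D=\begin{pmatrix}1&0\\-1&0\end{pmatrix}.
\]
One checks $AB^T=BA^T=0_2$, $CD^T=DC^T=0_2$ and $AD^T-BC^T=I_2$, so $\A\in Sp(2,\bR)$ in the sense of \eqref{sympM}; here $N(A)=N(A^T)=\{(0,t):t\in\bR\}$, yet $B(0,1)^T=(1,1)^T\notin N(A^T)$. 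What the symplectic structure does force is $B(R(A^T))\subseteq R(A)$, $B^T(N(A^T))=N(A)$ and $C(N(A))=N(A^T)$ (your own $\omega$-argument with $u=(x,0)$, $x\in N(A)$, and $v=(y,0)$ proves the last of these in one line); consequently $B$ carries $N(A)$ injectively onto \emph{some} complement of $R(A)$, which in general is not the orthogonal complement $N(A^T)$. So step (i) as you have set it up would fail, and no amount of recombining the identities will rescue it; a correct completion has to change the target of (i) --- e.g.\ work with $B^T:N(A^T)\to N(A)$, or with the orthogonal projection of $B|_{N(A)}$ onto $N(A^T)$ --- rather than search for a cleverer algebraic identity.
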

We denote by $B^{inv}: N(A^T)\to N(A)$ the pseudo-inverse of $B$. Our main result reads as follows.
\begin{theorem}[Integral Representations for generalized metaplectic operators] \label{main} With the notation introduced before, an operator $T$ is in the class $FIO(\A^T , v_s)$  \fif\
 $T$ admits the following integral representation: for $x=x_1+x_2\in R(A^T)\oplus N(A)=\rd$,  $ \xi_2\in N(A),\,y\in R(A)$,
\begin{align}
  \label{eq:khee}
Tf(x)= & \int_{R(A)}\int_{N(A)}e^{\pi i (A^{inv} B x_1 \cdot x_1 -B^T Dx_2\cdot x_2-CA^{inv}y\cdot y)+2\pi i (x_1\cdot A^{inv} y+ x_2\cdot \xi_2)}\\
&\quad\cdot\quad\sigma(x,A^{inv}y+\xi_2)\cF_{R(A)}{f}(y+(B^{inv})^T\xi_2)\,d\xi_2\, dy,\notag
\end{align}
where the  symbol $\sigma$ is in the class $ M^{\infty}_{1\otimes v_s}(\rdd)$.\\
\end{theorem}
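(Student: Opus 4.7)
The plan is to combine Theorem \ref{tc2} with the Morsche--Oonincx integral formula for classical metaplectic operators, and then carry out the symplectic/linear algebra dictated by the block decomposition \eqref{block}.

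First I would use Theorem \ref{tc2}(i), applied to the symplectic matrix $\A^T$, to reduce the statement to the classical case: $T\in FIO(\A^T,v_s)$ if and only if there exists $\tau\in M^{\infty}_{1\otimes v_s}(\rdd)$ with $T=\mu(\A^T)\tau^w(x,D)$ (equivalently $T=\sigma_0^w(x,D)\mu(\A^T)$). The theorem then reduces to two sub-claims: (a) the classical metaplectic operator $\mu(\A^T)$ itself admits an integral representation of the form \eqref{eq:khee} (with symbol $\sigma$ equal to a simple normalization factor), and (b) post-composition of such an integral representation with a Weyl pseudodifferential operator with symbol in $M^{\infty}_{1\otimes v_s}$ produces another operator of the same form with a new symbol in the same modulation class.

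For (a), I would invoke the Morsche--Oonincx integral formula from \cite{MO2002} for $\mu(\A^T)$. Writing the variable $x\in\rd$ in the orthogonal decomposition $\rd = R(A^T)\oplus N(A)$ (and the dual variable in $\rd = R(A)\oplus N(A^T)$), Morsche--Oonincx express $\mu(\A^T)h$ as an integral of dimension equal to the rank of the relevant block, combined with a partial Fourier transform over the Fourier-type directions. Using Lemma \ref{pseudo-inverse} to ensure that $B$ restricted to $N(A)$ is an isomorphism onto $N(A^T)$, together with the fact that $A$ restricted to $R(A^T)$ is an isomorphism onto $R(A)$, I would identify the MO integration and the partial Fourier transform with the integration over $R(A)$ (via $y$) and $N(A)$ (via $\xi_2$) and with $\cF_{R(A)} f(y+(B^{\text{inv}})^T\xi_2)$ appearing in \eqref{eq:khee}. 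The explicit phase is then obtained by rewriting the MO quadratic form using the symplectic relations
\[
AB^T=BA^T,\qquad CD^T=DC^T,\qquad AD^T-BC^T=I,
\]
and their transposes, restricted to the complementary subspaces; this yields precisely the three quadratic terms $A^{\text{inv}}Bx_1\cdot x_1$, $-B^TDx_2\cdot x_2$ and $-CA^{\text{inv}}y\cdot y$ in \eqref{eq:khee}, as well as the linear cross terms $x_1\cdot A^{\text{inv}}y$ and $x_2\cdot\xi_2$. Direct verification against Theorem \ref{tc2}(ii) in the non-degenerate case ($A$ invertible, so $N(A)=\{0\}$ and $\cF_{R(A)}=\cF$) provides a cross-check on the normalizations.

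For (b), once $\mu(\A^T)$ is written in the form \eqref{eq:khee}, multiplying by $\tau^w(x,D)$ on the right corresponds, at the level of the integral representation, to modifying the symbol $\sigma(x,A^{\text{inv}}y+\xi_2)$ via the Weyl composition (absorbed by the change of the frequency parameter). The invariance of $M^{\infty}_{1\otimes v_s}(\rdd)$ under the Weyl product with elements of $M^{\infty,1}$ --- the technical core underlying Theorem \ref{tc2} --- is exactly what guarantees that the resulting symbol $\sigma$ still lies in $M^{\infty}_{1\otimes v_s}$. The converse direction is then immediate: an operator of the form \eqref{eq:khee} can be unwound to $T=\sigma^w(x,D)\mu(\A^T)$, and Theorem \ref{tc2}(i) places it in $FIO(\A^T,v_s)$.

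The principal obstacle is the algebraic part of (a): the MO formula is phrased in its own parameterization, so the crux of the argument is to track the identifications under the two orthogonal decompositions $\rd = R(A^T)\oplus N(A) = R(A)\oplus N(A^T)$ and to verify, by a careful but essentially linear-algebra computation, that the symplectic relations collapse the MO phase to the form shown in \eqref{eq:khee}. Everything else --- the reduction via Theorem \ref{tc2}, the preservation of modulation-space symbols under composition, and the converse direction --- is structural and should follow by the arguments already developed in \cite{Wiener12,EFsurvey}.
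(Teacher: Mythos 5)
Your overall strategy coincides with the paper's: reduce via Theorem \ref{tc2} to the composition of a Weyl operator with a classical metaplectic operator, rewrite the Morsche--Oonincx formula with respect to the two decompositions $\rd=R(A^T)\oplus N(A)=R(A)\oplus N(A^T)$ using Lemma \ref{pseudo-inverse} and the symplectic block relations (this is exactly the paper's Theorem \ref{T1}), and then absorb the pseudodifferential factor. One bookkeeping error first: in the paper's normalization $\mu(\A)\in FIO(\A^T,s)$, so the factorization you need is $T=\sigma_1^w(x,D)\mu(\A)$, \emph{not} $T=\sigma_0^w(x,D)\mu(\A^T)$; starting from $\mu(\A^T)$ would produce \eqref{eq:khee} with the blocks $A^T,C^T,B^T,D^T$ of $\A^T$ in place of $A,B,C,D$. (This trap is set by the literal wording of Theorem \ref{tc2}, which is quoted in the convention of \cite{Wiener12}, but it does change the final formula.)

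The more substantive gap is in your step (b), which you dismiss as ``structural'' while locating the crux in the linear algebra of (a); in fact (b) is where most of the work of the paper's proof lies, and the mechanism you invoke is not the right one. No composition of two pseudodifferential operators occurs, so the invariance of $M^{\infty}_{1\otimes v_s}$ under the Weyl product with $M^{\infty,1}$ is not the relevant tool, and ``absorbed by the change of the frequency parameter'' does not explain how a Weyl operator acting on the \emph{output} variable gets inside an integral in the variables $(y,\xi_2)$. The paper's actual route is: (i) commute the quadratic chirp $e^{\pi i[A^{inv}Bx_1\cdot x_1-B^TDx_2\cdot x_2]}$ past $\sigma_1^w(x,D)$ via $\sigma_1^w(x,D)\mu(\mathcal{C})=\mu(\mathcal{C})(\sigma_1\circ\mathcal{C}^{-1})^w(x,D)$ for an explicit lower-triangular symplectic $\mathcal{C}$, with Lemma \ref{lkh1} keeping the pulled-back symbol in $M^{\infty}_{1\otimes v_s}$; (ii) convert the resulting Weyl operator to Kohn--Nirenberg form, which requires Lemma \ref{invM} to show the map $\mathcal{U}$ preserves the class; (iii) use that a Kohn--Nirenberg operator, applied to the remaining oscillatory integral $Pf$ and with the frequency integral split over $R(A^T)\oplus N(A)$, simply inserts $\sigma_3(x,A^{inv}y+\xi_2)$ under the integral sign after Fourier inversion on $R(A^T)$ and the change of variables $z=-Bt$ on $N(A)$. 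Without (ii) and (iii) the symbol cannot be slid into the integral in the form stated in \eqref{eq:khee}. Finally, the case $\dim R(A)=0$ rests on a different Morsche--Oonincx formula and needs the separate argument the paper gives; your plan does not account for it.
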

Observe that, if  $y\in R(A)$, then  $A^{inv}y\in R(A^T)$ and for any $\xi_2\in N(A)$,   we obtain $\xi=A^{inv}y+\xi_2\in R(A^T)\oplus N(A)= \rd$.\par
 When either the block $A$ is the null matrix or $A$ is nonsingular, the previous integral representation reduces to the following cases:
\begin{corollary}\label{mainC} The integral representation \eqref{eq:khee} yields the following special cases:\\
(i) If $\dim\,R(A)=0$ (i.e. $A=0_d$), then the operator $T\in FIO(\A^T , v_s)$  \fif\
\begin{equation}
Tf(x)= \intrd e^{-\pi iB^TD x\cdot x+2\pi i Bx\cdot t} \tilde{\sigma}_1(x, t) \,f(t)\,dt,\label{metap0eP}
\end{equation}
for a suitable symbol $\tilde{\sigma}_1\in M^{\infty}_{1\otimes v_s}(\rdd)$.\\
(ii) If $\dim \,R(A)=d$,  then the operator $T\in FIO(\A^T , v_s)$  \fif\
\begin{equation}\label{RAd}Tf(x)= \intrd e^{2\pi i \Phi_T(x,\xi)} \tilde{\sigma}_2(x,\xi)\hat{f}(\xi)\,d\xi
\end{equation}
for a suitable symbol $\tilde{\sigma}_2\in M^{\infty}_{1\otimes v_s}(\rdd)$ and where the phase function
\begin{equation}\label{fase2}\Phi_T(x,\xi)= \frac12 A^{-1} B x\cdot x+ A^{-T} x \cdot  \xi-\frac12 CA^{-1}\xi\cdot \xi
\end{equation}
 is the generating function of the canonical transformation $\cA^T$ (i.e., the integral representation of $T$ in \eqref{eq:kh22}).
\end{corollary}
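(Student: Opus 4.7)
The plan is to derive both statements as direct specializations of Theorem~\ref{main}, by inserting $\dim R(A)=0$ and $\dim R(A)=d$ into \eqref{eq:khee} and simplifying. In each extreme one of the two integrals in \eqref{eq:khee} collapses to a point evaluation, the pseudo-inverses reduce to genuine inverses, and the partial Fourier transform $\cF_{R(A)}$ becomes either the identity (in (i)) or the full Fourier transform (in (ii)). The only genuinely analytic input will be the invariance of the symbol class $M^{\infty}_{1\otimes v_s}(\rdd)$ under invertible linear changes of variable on the frequency coordinates of the symbol, which I will invoke at the end of each case.

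For case (i), I would first set $x_1=0$, $y=0$ and $\xi=\xi_2$ in \eqref{eq:khee}. Because $R(A)=\{0\}$ and, by Lemma~\ref{pseudo-inverse}, $B$ is an isomorphism of $\rd$ (so $B^{inv}=B^{-1}$), every term involving $A^{inv}y$ drops out and $\cF_{R(A)}f=f$. The surviving formula reads
\begin{equation*}
Tf(x)=\int_{\rd} e^{-\pi i B^T D x\cdot x+2\pi i x\cdot \xi_2}\,\sigma(x,\xi_2)\,f(B^{-T}\xi_2)\,d\xi_2.
\end{equation*}
A change of variable $t=B^{-T}\xi_2$, together with the identity $x\cdot B^T t=Bx\cdot t$, absorbs the Jacobian $|\det B|$ into a new symbol $\tilde\sigma_1(x,t):=|\det B|\,\sigma(x,B^Tt)$, which still lies in $M^{\infty}_{1\otimes v_s}(\rdd)$ by the modulation-space invariance above. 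The outcome is precisely \eqref{metap0eP}.

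For case (ii), I would observe that $\xi_2\in N(A)=\{0\}$ forces the inner integral to collapse, while $y$ ranges over $R(A)=\rd$, $A^{inv}=A^{-1}$ and $\cF_{R(A)}f=\hat f$. Renaming $y$ as $\xi$ and using $x\cdot A^{-1}\xi=A^{-T}x\cdot\xi$, formula \eqref{eq:khee} reduces to
\begin{equation*}
Tf(x)=\int_{\rd}e^{\pi i(A^{-1}Bx\cdot x-CA^{-1}\xi\cdot\xi)+2\pi i\,A^{-T}x\cdot\xi}\,\sigma(x,A^{-1}\xi)\,\hat f(\xi)\,d\xi,
\end{equation*}
whose exponent is exactly $2\pi i\,\Phi_T(x,\xi)$; setting $\tilde\sigma_2(x,\xi):=\sigma(x,A^{-1}\xi)$ yields \eqref{RAd}. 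As an independent check, Theorem~\ref{tc2}(ii) applied to $\A^T$ (whose upper-left block $A^T$ is invertible iff $A$ is) produces the same phase $\Phi_T$ after transposing blocks, confirming both the representation and the membership of the symbol in $M^{\infty}_{1\otimes v_s}(\rdd)$.

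I do not anticipate any serious obstacle. Every step is either an algebraic simplification of \eqref{eq:khee} or a linear change of variable. The only point that deserves care is the preservation of $M^{\infty}_{1\otimes v_s}(\rdd)$ under the substitutions $\xi_2\mapsto B^T\xi_2$ in (i) and $\xi\mapsto A^{-1}\xi$ in (ii), which follows from the standard $GL(d,\R)$-action on the frequency coordinates of modulation-space symbols.
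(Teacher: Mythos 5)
Your proposal is correct and follows essentially the same route as the paper: part (ii) is obtained exactly as in the paper's proof, by specializing \eqref{eq:khee} with $N(A)=\{0\}$, $A^{inv}=A^{-1}$, rewriting $x\cdot A^{-1}\xi=A^{-T}x\cdot\xi$ to recognize $\Phi_T$, and invoking Lemma \ref{lkh1} for the symbol $\sigma(x,A^{-1}\xi)$; for part (i) the paper's primary derivation is the direct computation of $\sigma_1^w(x,D)\mu(\A)$ with $\mu(\A)$ given by \eqref{metap0e} inside the proof of Theorem \ref{main}, but your specialization of \eqref{eq:khee} with $y=0$, $B^{inv}=B^{-1}$ and the substitution $t=B^{-T}\xi_2$ is precisely the ``recapture'' argument the paper itself records at the end of that proof, with the correct Jacobian $|\det B|$ absorbed into the symbol.
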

Applications to the previous formulae can be found  in quantum mechanics. The solutions to Cauchy problems for Schr\"{o}dinger equations   with bounded perturbations,  provided by  pseudodifferential operators $\sigma^w(x,D)$ having symbols  $\sigma$ in the classes $M^{\infty}_{1\otimes v_s}(\rdd)$, are generalized metaplectic operators applied to the initial datum (cf. \cite{EFsurvey}, see also \cite{MetapWiener13}).  So, formula \eqref{eq:khee} can be applied to find an integral representation of such operators.\par
   As simple example, one can consider the following Cauchy problem for the anisotropic  perturbed harmonic oscillator in dimension $d=2$ (see Section \ref{Sesempio} below). For  $x=(x_1,x_2)\in \R\times\R$, $t\in\R$, we study
\begin{equation}\label{5.31eq}
\begin{cases} i  \partial_t
 u =H u,\\\
u(0,x)=u_0(x),
\end{cases}
\end{equation}
where
\begin{equation}\label{5.31eqH}H= -\frac{1}{4\pi}\partial^2_{x_2} +\pi x_2^2 +  V(x_1,x_2),\end{equation}
with  $V\in M^\infty_{1\otimes v_s}(\R^2)$, $s>4$. The initial datum $u_0$ is in $\cS(\R^2)$ or in a suitable rougher modulation space, cf. Section \ref{Sesempio}. The solution  $u(t,x)=e^{-itH}u_0$,
has the propagator  $e^{-itH}$ which turns out to be a one-parameter family of generalized metaplectic operators $FIO(\A_t, s)$, related to the symplectic matrices
\begin{equation}\label{esempio}
\A_t = \begin{pmatrix}1 & 0 & 0 & 0\\0&\cos t&0&\sin t\\
0&0&1&0\\
0&-\sin t&0&\cos t\end{pmatrix}\quad t\in\R.
\end{equation}
For $t\in\R$, the $2\times 2$ block $A_t$ is given by \begin{equation}\label{esempiosub}
A_t = \begin{pmatrix}1 & 0 \\0&\cos t\end{pmatrix}.
\end{equation}
Observe that $\det A_t=\cos t$ so that $A_t$ is a singular matrix whenever $t= \pi/2+ k \pi$, $k\in\bZ$, the so-called \emph{caustics} of the solution. In this case, using formula \eqref{eq:khee}, we are able to give an integral representation as well.

To compare with other results in the literature, we recall \cite[Sec.6-7, Chapter 7]{GS2}, which  provides an overview of the classical results on caustics in the context of spectral asymptotics. The works \cite{GS3,Z} are relevant recent references on Fourier integral operators and their applications, from the point of view of the semiclassical limit, i.e. the limit with the Planck constant $\hbar$ tending to $0$. The book by Zworski \cite{Z} (Chapters 10 and 11 are the most relevant in the context of the current manuscript) nicely complements the book by Folland \cite{folland89}. It presents a current view of the topic with the orientation towards partial differential equations. The book \cite{GS3} addresses directly many issues studied in the current manuscript, in the framework of semi-classical analysis. They study local representations of differential operators,  even at caustics, and  apply  their representations to global asymptotic solutions of hyperbolic equations. We refer to \cite[Chapters 4,5,8]{GS3} for the most relevant results.

\section{Preliminaries and notation}\label{prelim}
Here and in the sequel, for $x,y\in\R^m$, $x\cdot y$ denotes the inner product in $\R^m$. As recalled above, given a matrix $A$, we call $A^T$  the transpose of $A$ and denote by $R(A)$ and $N(A)$ the range and the kernel of the matrix $A$, respectively.\par
 Given  $\A\in \Spnr$ with the $2\times 2$ block decomposition \eqref{block},   from \eqref{sympM} it follows that the four blocks must satisfy the following properties:
\begin{align}
D^TA-B^TC&=I_d\label{e1}\\
A^TC-C^TA&=0_d\label{e2}\\
D^TB-B^TD&=0_d\label{e3}.
\end{align}
Moreover, since also
$$
\A^{-1}=\begin{pmatrix} D^T&-B^T\\-C^T&A^T\end{pmatrix},
$$
is a symplectic matrix,  relations \eqref{e2} and \eqref{e3} for $\A^{-1}$ give
\begin{align}
\label{e4}
CA^{-1}-A^{-T}C^T&=0_d\\
-AB^T+BA^T&=0_d.\label{e5}
\end{align}\par
The metaplectic representation $\mu$ of (the two-sheeted cover of)
the symplectic group arises as intertwining operator between the
 Schr\"odinger representation $\rho$ of the Heisenberg
group $\H^d$ and the representation that is obtained from it by
composing $\rho$ with the action of $\Spnr$ by automorphisms on
$\H^d$. Namely, the Heisenberg group $\H^d$ is the group obtained by defining on
$\R^{2d+1}$  the product law
$$
(z,t)\cdot(z',t')=(z+z',t+t'+\frac{1}{2}\omega(z,z')), \quad z,z'\in\rdd,\,t,\,t'\in\R,
$$
where $\omega$ is  the symplectic form
\begin{equation*}
\omega(z,z')=z\cdot Jz', \qquad z,z'\in\R^{2d}.
\end{equation*}
The Schr\"odinger representation of the group $\H^d$
on $\lrd$ is then defined by
$$
\rho(p,q,t)f(x)=e^{2\pi it}e^{\pi i  p\cdot q}
e^{2\pi i p\cdot x}f(x+q),\quad x,q,p\in\rd,\,t\in\R.
$$
The symplectic group acts on $\H^d$ via
automorphisms that leave the center
$\{(0,t):t\in\R\}\in\H^d\simeq\R$ of $\H^d$ pointwise fixed:
$$
A\cdot\left(z,t\right) =\left(Az,t\right).
$$
Therefore, for any fixed $\A\in Sp(d,\R)$ there is a representation
$$
\rho_{\A^T}:\H^d\to\cU(L^2(\R^d)), \qquad \left(z,t\right)\mapsto
\rho\left(\A^T\cdot(z,t)\right)
$$
whose restriction to the center is a multiple of the identity. By
the Stone-von Neumann  theorem, $\rho_{\A^T}$ is equivalent to $\rho$.
So, there exists an intertwining unitary operator
$\mu(\A)\in\cU(L^2(\R^d))$ such that
\begin{equation}\label{ATr}\rho_{\A^T}(z,t)=\mu(\A)\circ\rho(z,t)\circ\mu(\A)^{-1}\quad
(z,t)\in \H^d.\end{equation}
By Schur's lemma, $\mu$ is determined up to a
phase factor $e^{is}, s\in\R$. Actually,  the phase
ambiguity is only a sign, so that $\mu$ lifts to a
representation of the (double cover of the) symplectic group. \par

An alternative definition of a metaplectic operator (cf. \cite{folland89,Norbert1999,MO2002}), up to a constant $c$, with $|c|=1$, involves a time-frequency representation, the so-called Wigner distribution $W_f$ of a function $f\in L^2(\rd)$, given by
\begin{equation}
\label{eq3232} W_{f}(x,\xi)=\int  e^{-2\pi i y\cdot \xi}
f\left(x+\frac{y}2\right)\overline{f\left(x-\frac{y}2\right)}
\,dy.
\end{equation}
The crucial property of the Wigner distribution $W$  is that it intertwines $\mu(\A)$ and the affine action on
$\rdd$:
\begin{equation}\label{wignermuA}
W_{\mu(\A)f}=W_{f}\circ \A,\quad \A\in Sp(d,\R).
\end{equation}
Since  $W_g=W_f$ if and only if there exists a constant $c\in\bC$, with $|c|=1$, such that  $g=c f$, it is clear that, up to a constant $c$ with $|c|=1$, a metaplectic operator can be defined by the intertwining relation \eqref{wignermuA}. \par
Morsche and Oonincx in \cite{MO2002} use the relation
\eqref{wignermuA} to obtain an integral representation (up to a constant $c\in\bC$, with $|c|=1$) of every metaplectic operator $\mu(\A)$,   $\A\in  Sp(d,\R)$, extending the preceding   results for special symplectic matrices contained in the pioneering work of Frederix \cite{Fred77}, in Folland's book  \cite{folland89} and in Kaiblinger's  thesis \cite{Norbert1999} (see also \cite{Gos11,book,hormander3} and references therein).\par
To state the integral representation for metaplectic operators contained in \cite{MO2002}, we need to introduce some preliminaries (cf. \cite{BI92,BI99,MO2002}).
For a $d\times d$ matrix $A$ and a linear subspace $L$ of $\rd$ with $\dim\,L=r$, $q_L (A)$ denotes the $r$-dimensional volume of the parallelepiped
$$X=\{x\in\rd\,:\,x=\xi_1 A e_1+\cdots +\xi_r A e_r,\,\,0\leq \xi_i\leq 1,\,\,i=1,\dots, r\}
$$
spanned by the vectors $A e_1,\dots Ae_r$, where $e_1,\dots, e_r$ is any orthonormal basis of $L$. If $\dim\, A(L)=\dim \,L=r$, then the $r$-dimensional volume of $X$ is positive, otherwise this volume is zero. The number $q_L(A)$ can be interpreted as a matrix volume as follows. We collect the vectors $e_1,\dots, e_r$ as columns into the $d\times r$ matrix $E=[e_1,\dots,e_r]$. Assuming $\dim\, A(L)=\dim \,L=r$, the matrix $AE$ has full column rank and
$$q_L(A)=\mathrm{vol}\,AE=\sqrt{\det (E^TA^TAE)}.
$$
If $L=\rd$ and $A$ is nonsingular, then $q_L(A)=|\det A|$. \par
The definition of $q_L(A)$ is extended to the following cases:  we set $q_L(A)=1$ either when $L$ is the null space and $A$ is nonsingular or  $A$ is the null matrix and $\dim\,L>0$.
The number $q_L(A)$ appears in the change-of-variables formulas for more dimensional integral as follows.
\begin{lemma}\label{lemma1}
Under the assumptions above, if $\dim\,A(L)=\dim\,L$  we have
\begin{equation}\label{changevar}
\int_{L} \f (Ax)\,dx=\frac1 {q_L(A)}\int_{A(L)}\f(x)\,dx,
\end{equation}
for every function $\f\in\cS(\rd)$ or, more generally,  any function $\f$ for which the above integrals exist.
\end{lemma}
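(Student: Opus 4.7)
The plan is to reduce both integrals in \eqref{changevar} to integrals over $\R^r$, with $r=\dim L=\dim A(L)$, via isometric parametrizations of $L$ and $A(L)$, and then to identify the Jacobian that appears with the quantity $q_L(A)$. Since $\varphi$ depends only on the point $Ax$ and the restriction $A|_L:L\to A(L)$ is a linear isomorphism by hypothesis, such a reduction is bound to succeed, and the argument essentially consists in pinning down the correct scalar factor.

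For the first reduction, I would take $E=[e_1,\dots,e_r]$ to be the $d\times r$ matrix whose columns are the fixed orthonormal basis of $L$. Then $y\mapsto Ey$ is a linear isometry of $\R^r$ onto $L$, and the standard change of variables yields
\[
\int_L \varphi(Ax)\,dx=\int_{\R^r}\varphi(AEy)\,dy.
\]
For the second reduction, I would choose any orthonormal basis $f_1,\dots,f_r$ of $A(L)$, collect it in a $d\times r$ matrix $F$, and use the fact that $AE$ has full column rank with range $A(L)$ to write uniquely $AE=FM$ with $M\in GL(r,\R)$. Substituting $z=My$ (so $dy=|\det M|^{-1}\,dz$), and then using that $z\mapsto Fz$ is an isometry $\R^r\to A(L)$, gives
\[
\int_{\R^r}\varphi(AEy)\,dy=\frac{1}{|\det M|}\int_{\R^r}\varphi(Fz)\,dz=\frac{1}{|\det M|}\int_{A(L)}\varphi(u)\,du.
\]

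To close the argument I would identify $|\det M|$ with $q_L(A)$: since $F^TF=I_r$, the relation $AE=FM$ yields
\[
E^TA^TAE=(AE)^T(AE)=M^TF^TFM=M^TM,
\]
whence $|\det M|=\sqrt{\det(E^TA^TAE)}=q_L(A)$, which gives \eqref{changevar}. The only real subtlety, and the step I expect to be a mild obstacle, is the well-posedness check that $q_L(A)$ depends only on $L$ and $A$, not on the chosen orthonormal basis: a change of basis replaces $E$ by $EO$ for some $O$ in the orthogonal group, and under this substitution $\det(E^TA^TAE)$ is invariant, so both the definition of $q_L(A)$ and the computation above are basis-free.
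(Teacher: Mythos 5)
Your proof is correct. The paper itself does not prove Lemma \ref{lemma1}: it states it as a known fact and refers to the matrix-volume literature (Ben-Isra\"el \cite{BI92,BI99} and \cite{MO2002}), so there is no in-paper argument to compare against. Your argument is the standard one behind those references: parametrize $L$ and $A(L)$ isometrically by $E$ and $F$, factor $AE=FM$ with $M=F^TAE\in GL(r,\R)$, and read off the Jacobian $|\det M|=\sqrt{\det(E^TA^TAE)}=q_L(A)$ from $M^TM=E^TA^TAE$. All steps check out, including the invertibility of $M$ (forced by $\operatorname{rank}(AE)=r$ under the hypothesis $\dim A(L)=\dim L$) and the basis-independence of $q_L(A)$ under $E\mapsto EO$ with $O$ orthogonal; the only degenerate case, $r=0$, is trivial and consistent with the paper's convention $q_L(A)=1$. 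This is a complete, self-contained proof of the statement the paper takes for granted.
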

\begin{corollary}\label{changevaryC}
Under the assumptions of Lemma \ref{lemma1}, for any $y\in A(L)$, we have
\begin{equation}\label{changevary}
\int_{L} \f (Ax+y)\,dx=\frac1 {q_L(A)}\int_{A(L)}\f(x)\,dx.
\end{equation}
\end{corollary}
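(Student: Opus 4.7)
The plan is to reduce Corollary \ref{changevaryC} to Lemma \ref{lemma1} by absorbing the shift $y$ into a translation of the integration variable inside the subspace $L$. Since the hypothesis $\dim A(L) = \dim L = r$ forces the restriction $A|_L : L \to A(L)$ to be a linear isomorphism, every $y\in A(L)$ has a preimage $x_0\in L$ satisfying $Ax_0=y$.

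\medskip

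\noindent\textbf{Key steps.} First, fix $y\in A(L)$ and choose $x_0\in L$ with $Ax_0=y$, which is possible by the above isomorphism (uniqueness is not needed). Second, rewrite the integrand as
\[
\f(Ax+y)=\f(Ax+Ax_0)=\f(A(x+x_0)),
\]
and perform the translation $x'=x+x_0$ in the integral. Since $L$ is a linear subspace of $\rd$ and both $x,x_0\in L$, the variable $x'$ again ranges over all of $L$; moreover, the translation is an isometry with unit Jacobian on $L$, so the $r$-dimensional Lebesgue measure on $L$ is preserved. Hence
\[
\int_L \f(Ax+y)\,dx=\int_L \f(Ax')\,dx'.
\]
Third, apply Lemma \ref{lemma1} to the right-hand side to obtain
\[
\int_L \f(Ax')\,dx'=\frac{1}{q_L(A)}\int_{A(L)}\f(x)\,dx,
\]
which yields the claimed identity. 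The hypotheses needed for Lemma \ref{lemma1}, namely $\dim A(L)=\dim L$ together with the integrability of $\f$ on $A(L)$, are inherited from the corollary.

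\medskip

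\noindent\textbf{Where any subtlety lies.} There is no real technical obstacle here: the result is essentially a translation-invariance statement, and the only point requiring care is to confirm that the shift $y$ lies in $A(L)$ precisely so that a \emph{preimage inside $L$} exists; without this, the substitution $x'=x+x_0$ could not be carried out within $L$. Once this observation is made, the proof is a one-line application of Lemma \ref{lemma1}.
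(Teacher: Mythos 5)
Your proof is correct and matches the paper's argument: the paper's own proof simply states that the claim is an immediate consequence of Lemma \ref{lemma1} because $A$ is an isomorphism from $L$ onto $A(L)$, and your write-up just makes explicit the preimage $x_0\in L$ with $Ax_0=y$ and the translation-invariance step that this isomorphism enables. No issues.
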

\begin{proof}
It is an immediate consequence of Lemma \ref{lemma1}, since by assumption $\dim\,A(L)=\dim\,L$ so that  $A$ is a an isomorphism from $L$ onto $A(L)$.
\end{proof}

 We associate to  a symplectic matrix $\A$ with block decomposition \eqref{block} a constant
\begin{equation}\label{cost}
c(\A)= \sqrt{\frac{s(A)}{q_{N(A)}(C)}},
\end{equation}
where $s(A)$ denotes the product of the nonzero singular values of the $d\times d$ block $A$, or equivalently
\begin{equation}\label{svalues}
s(A)=q_{R(A^T)}(A).
\end{equation}
The integral representation of a metaplectic operator proved in \cite[Theorem 1]{MO2002} and applied to the matrix
$$\mathcal{B}=\A J= \begin{pmatrix} -B&A\\-D&C\end{pmatrix}$$ gives the following integral representation.
\begin{theorem}\label{aJ}
Consider $\A\in\Spnr$ with the $2\times 2$ block decomposition in \eqref{block} and set $r=\dim\,R(A)$.
Then, for $f\in\cS(\rd)$, the metaplectic operator $\mu(\A)$, up to a constant $c\in\bC$, with $|c|=1$, can be represented as follows:\\
\noindent $(i)$ If $r>0$ then
\begin{equation}\label{metap1e}
\mu(\A)f(x)=c(\A)  \int_{R(A^T)}e^{-\pi i B^TD x\cdot x-\pi i A^TC t\cdot t+2\pi i A^TD x\cdot t} \hat{f}(At-Bx)\, dt.
\end{equation}
\noindent $(ii)$ If $r=0$ then
\begin{equation}
\mu(\A)f(x)=\sqrt{|\det B|} \intrd e^{-\pi iB^TD x\cdot x+2 \pi i Bx \cdot t} f(t)\,dt.\label{metap0e}
\end{equation}
\end{theorem}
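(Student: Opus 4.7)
The strategy is to apply \cite[Theorem 1]{MO2002}, which represents a metaplectic operator $\mu(\tilde\A)$ as an integral of dimension $\dim R(\tilde B)$, where $\tilde B$ is the upper right $d\times d$ block of $\tilde\A$, to the auxiliary symplectic matrix $\mathcal{B}=\A J$. A direct block computation gives
$$
\mathcal{B}=\A J=\begin{pmatrix}-B&A\\-D&C\end{pmatrix},
$$
which is symplectic as a product of symplectic matrices, and whose upper right block is precisely $A$. Hence the Morsche--Oonincx formula, specialized to $\mathcal{B}$, involves an integral of dimension $r=\dim R(A)$, as claimed.

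Next, one converts the resulting representation of $\mu(\mathcal{B})$ into one of $\mu(\A)$ by exploiting the (projective) representation property of $\mu$. Writing $\mathcal{B}=\A J$ gives $\mu(\mathcal{B})=\mu(\A)\mu(J)$ up to a unimodular constant; since $\mu(J)$ coincides with the Fourier transform up to such a constant, we obtain, modulo some $c\in\bC$ with $|c|=1$,
$$
\mu(\A)f(x)=\mu(\mathcal{B})(\hat f)(x).
$$
Substituting $\hat f$ in the Morsche--Oonincx formula and identifying the blocks of $\mathcal{B}$ produces, for $r>0$, an integral over $R(A^T)$ (the orthogonal complement of $N(A)$, which has dimension $r$) with $\hat f$ evaluated at $At-Bx$ and the quadratic phase $-\pi B^TD\, x\cdot x-\pi A^TC\, t\cdot t+2\pi A^TD\, x\cdot t$. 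The symplectic identities \eqref{e1}--\eqref{e5} are used to simplify the cross terms. The normalization $c(\A)$ in \eqref{metap1e} then arises by specializing \eqref{cost} and \eqref{svalues} to $\mathcal{B}$: the nonzero singular values of the upper right block $A$ yield $s(A)$, and the $q$-factor associated to the pertinent block of $\mathcal{B}$ reduces to $q_{N(A)}(C)$.

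For part $(ii)$, when $r=0$ the upper right block of $\mathcal{B}$ also vanishes, so the degenerate case of Morsche--Oonincx applies and $\mu(\mathcal{B})$ is represented without integration. The symplectic identity \eqref{e1} then forces $B^TC=-I_d$, so $B$ is invertible. Applying the Fourier transform in order to recover $\mu(\A)$ introduces a full $d$-dimensional integral over $\rd$, and writing $\hat f$ explicitly yields \eqref{metap0e} with normalization $\sqrt{|\det B|}$.

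\textbf{Main obstacle.} The principal difficulty is bookkeeping rather than conceptual: one must carefully translate the Morsche--Oonincx formula---written in terms of the blocks of $\mathcal{B}$---back into the blocks $A,B,C,D$ of $\A$, while tracking the unimodular phase constants arising from the identification $\mu(J)\simeq\mathcal{F}$ and from the projective nature of $\mu$. The symplectic relations \eqref{e1}--\eqref{e5} are applied repeatedly to rewrite the quadratic phase and to identify the integration domain in the form stated.
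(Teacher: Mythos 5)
Your proposal is correct and follows essentially the same route as the paper, which likewise obtains Theorem \ref{aJ} by applying \cite[Theorem 1]{MO2002} to the auxiliary symplectic matrix $\mathcal{B}=\A J$ (whose upper-right block is $A$) and converting back via $\mu(\A)f=\mu(\mathcal{B})\hat f$, using $\mu(J)=\mathcal{F}^{-1}$ and the block identities \eqref{e1}--\eqref{e5}; the paper in fact gives even less detail than you do, simply asserting the result as a direct consequence of the Morsche--Oonincx formula. Your identification of the normalization $c(\A)=\sqrt{s(A)/q_{N(A)}(C)}$ and of the degenerate case $A=0_d$ (where \eqref{e1} forces $B$ invertible) matches the paper's treatment.
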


In the sequel  the integral representations of metaplectic operators will be always meant ``up to a constant'' $c\in\bC$, with $|c|=1$.
\begin{corollary}\label{detA} Under the assumptions of Proposition \ref{aJ}, if $R(A)=d$, that is the block $A$ is nonsingular, then
\begin{equation}
  \label{eq:kh22M}
\mu(\A)f(x)=|\det\, A|^{-1/2}\int_{\rd} e^{2\pi i\Phi_T(x,\xi)}\hat{f}(\xi)\, d\xi,
\end{equation}
where
the phase function $\Phi_T$ is defined in \eqref{fase2}. (Observe that $A^{-1}B$ and $CA^{-1}$ are symmetric matrices by \eqref{e5} and \eqref{e4} respectively).
\end{corollary}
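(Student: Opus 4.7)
The plan is to specialize Theorem \ref{aJ}(i) to the case $r=\dim R(A)=d$, in which the block $A$ is invertible, and then perform a linear change of variables under the $t$-integral in \eqref{metap1e}.

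First I compute the constant $c(\A)=\sqrt{s(A)/q_{N(A)}(C)}$. Since $A$ is nonsingular, $R(A^T)=\rd$ and $s(A)=q_{\rd}(A)=|\det A|$ by the last formula of Lemma \ref{lemma1}'s preamble; on the other hand, $N(A)=\{0\}$ is the null space, so by the convention recalled before Lemma \ref{lemma1} one has $q_{N(A)}(C)=1$. Hence $c(\A)=|\det A|^{1/2}$.

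Second, in the $t$-integral of \eqref{metap1e} I perform the substitution
\[
\xi=At-Bx,\qquad t=A^{-1}\xi+A^{-1}Bx,\qquad dt=|\det A|^{-1}\,d\xi,
\]
so that $\hat f(At-Bx)=\hat f(\xi)$, while the prefactor becomes $c(\A)\,|\det A|^{-1}=|\det A|^{-1/2}$, matching the claim.

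The core algebraic step is to show that the resulting phase equals $2\pi i\Phi_T(x,\xi)$ with $\Phi_T$ as in \eqref{fase2}. Expanding $t=A^{-1}(\xi+Bx)$ gives
\[
A^TCt\cdot t=(\xi+Bx)\cdot CA^{-1}(\xi+Bx),\qquad A^TDx\cdot t=(\xi+Bx)\cdot Dx,
\]
where I have used the symmetry $CA^{-1}=A^{-T}C^T$ from \eqref{e4} to rewrite the first expression and the identity $t^TA^T=\xi^T+x^TB^T$ to rewrite the second. Collecting terms according to their bi-degree in $(x,\xi)$ yields
\begin{itemize}
\item the $\xi\cdot\xi$ contribution $-\pi i\,CA^{-1}\xi\cdot\xi$, directly;
\item the mixed contribution $2\pi i\,(D-CA^{-1}B)x\cdot\xi$, which equals $2\pi i\,A^{-T}x\cdot\xi$ after showing the key identity $D-CA^{-1}B=A^{-T}$;
\item the $x\cdot x$ contribution $-\pi i B^TDx\cdot x-\pi iB^TCA^{-1}Bx\cdot x+2\pi iB^TDx\cdot x=\pi iB^T(D-CA^{-1}B)x\cdot x=\pi iA^{-1}Bx\cdot x$, where I used the same identity and the symmetry of $B^TD$ from \eqref{e3}.
\end{itemize}

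The identity $D-CA^{-1}B=A^{-T}$ is the one point that requires care and is, I expect, the main obstacle. I would prove it by transposing \eqref{e1} to get $A^TD-C^TB=I_d$, hence $D=A^{-T}+A^{-T}C^TB$, and then substituting $A^{-T}C^T=CA^{-1}$ from \eqref{e4} to cancel the $B$-term. A parenthetical remark that $CA^{-1}$ and $A^{-1}B$ are symmetric (by \eqref{e4} and \eqref{e5} respectively) is already recorded in the statement of the corollary and ensures that the resulting phase function $\Phi_T$ is real-quadratic in $x$ and $\xi$.

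Assembling the three contributions gives the phase $2\pi i\Phi_T(x,\xi)$, and combined with the prefactor $|\det A|^{-1/2}$ computed above this yields \eqref{eq:kh22M}.
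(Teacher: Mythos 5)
Your proposal is correct and follows essentially the same route as the paper: specialize Theorem \ref{aJ}(i), note $c(\A)=|\det A|^{1/2}$, substitute $\xi=At-Bx$, and reduce the phase via $D-CA^{-1}B=A^{-T}$ (from \eqref{e1} and \eqref{e4}) together with the symmetry of $CA^{-1}$ and $A^{-1}B$; you merely spell out the ``straightforward computations'' the paper leaves implicit. (One tiny slip: in the $x\cdot x$ term the identity you actually need is $B^TA^{-T}=A^{-1}B$, i.e.\ \eqref{e5}, not the symmetry of $B^TD$ from \eqref{e3}.)
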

\begin{proof}
Since $A$ is nonsingular, $N(A)=0$ and $R(A^T)=\rd$ so that $c(\A)=\sqrt{|\det A|}$. We make the change of variables $At-Bx=\xi$ in the integrals in \eqref{metap1e} so that $dx= |\det A|^{-1} d\xi$. Making straightforward computations and using the following properties:  the matrix $CA^{-1}$ is  symmetric by relation \eqref{e4}  and  $D-CA^{-1}B=A^{-T}$ by \eqref{e1}, the result immediately follows.
\end{proof}
\begin{remark}\label{33} (i) If $\Phi_T(x,\xi)$ is as in \eqref{fase2}, we have
$$\nabla_{x}\Phi_T(x,\xi)= A^{-1}B x+A^{-1}\xi,\quad \nabla_{\xi}\Phi_T(x,\xi)=A^{-T}x-CA^{-1}\eta$$
and using $D^T=B^T A^{-T}C^T+A^{-1}$ (by relation \eqref{e1}) and $A^{-1}B=B^TA^{-T}$ (by relation \eqref{e5}), we obtain
$$ \begin{pmatrix} x\\ \nabla_{x}\Phi_T\end{pmatrix}=\begin{pmatrix} A^T&C^T\\B^T&D^T\end{pmatrix}\begin{pmatrix} \nabla_{\xi}\Phi_T\\ \xi\end{pmatrix}=\A^T \begin{pmatrix} \nabla_{\xi}\Phi_T\\ \xi\end{pmatrix},
$$
that is the function $\Phi_T$ is the generating phase function of the canonical transformation $\A^T$. Indeed, the phase function $\Phi_T$ in \eqref{fase2} coincides with the generating phase function $\Phi$ in \eqref{fase} when $\A$ is replaced by $\A^T$. The fact we obtain $\A^T$ instead of $\A$ depends on our definition of the Schr\"{o}dinger representation, with follows the one in \cite{MO2002}. Hence, under our notations, $\mu(\A) \in FIO(\A^{T}, v_s)$, for every $s\geq 0$.
Observe that, up to a constant, this is also the integral representation of Theorem $(4.51)$ in \cite{folland89}.\\
(ii) If  $0<\dim\, R(A)=r<d$, then the integral representation in \eqref{metap1e} can be interpreted as a degenerate form of a type I generalized metaplectic operator in $FIO(\A^T,v_s)$, with constant symbol $\sigma=|\det A|^{-1/2}$.\\
(iii)  If $\dim \,R(A)=0$, then \begin{equation}\label{range0}\A=\begin{pmatrix} 0_d&B\\B^{-T}&D\end{pmatrix}\end{equation} and the integral representation in \eqref{metap0e} is, up to a constant factor, the one of Theorem $(4.53)$ in \cite{folland89} (with $\A$ replaced by $\A^T$, so that the block $B$ is replaced by $B^{-1}$ in formula $(4.54)$ of \cite{folland89}).
\end{remark}
\par

We recall the integral representation of Theorem \ref{aJ} for elements of $\Spnr$ in special form, which we shall use in the sequel.
For $f\in \cS(\R^d)$, we have
\begin{align}
\mu\left(\begin{pmatrix}  A&0_d\\ 0_d&\;A^{-T}\end{pmatrix} \right)f(x)
&=\sqrt{|\det A|}f(A x)\label{diag}\\
\mu\left(\begin{pmatrix} I_d&0_d\\ C&I_d \end{pmatrix} \right)f(x)
&= e^{-\pi i Cx\cdot x}f(x)\label{lower}\\
\mu\left(J\right)&=\mc{F}^{-1}\label{iot},
\end{align}
where $\mc{F}$ denotes the Fourier transform
$$
\mc{F} f(\xi)=\int_{\R^d}f(x)e^{-2\pi i x\cdot \xi}\;dx,
\qquad f\in L^1(\R^d).
$$

\subsection{Time-frequency methods}
We recall here the time-frequency tools we shall use to prove the integral representation for generalized metaplectic operators.\par
The polarized version of the Wigner distribution in \eqref{eq3232}, is the so called cross-Wigner distribution $W_{f,g}$, given by
\begin{equation}
\label{eq3232c} W_{f,g}(x,\xi)=\int  e^{-2\pi i y\cdot \xi}
f\left(x+\frac{y}2\right)\overline{g\left(x-\frac{y}2\right)}
\,dy,\quad f,g\in L^2(\rd).
\end{equation}
A pseudodifferential operator in the Weyl form \eqref{Weylform} with symbol $\sigma\in\cS'(\rdd)$ can be also defined by
\begin{equation}\label{Weyl}\la \sigma^w(x,D)f,g\ra=\la \sigma, W(g,f)\ra\quad f,g\in\cS(\rd),\end{equation}
where the brackets $\la\cdot,\cdot\ra$ denote the extension to $\sch' \times\sch $ of
the inner product $\la f,g\ra=\int f(t){\overline {g(t)}}dt$ on
$L^2$.
Observe that by the intertwining relation \eqref{wignermuA} and the definition of Weyl operator \eqref{Weyl}, it follows the property
\begin{equation}\label{weylchirp}
\sigma^w(x,D)\mu(\A)=\mu(\A)(\sigma\circ\A^{-1})^w(x,D).
\end{equation}
{\bf Weighted modulation spaces}.  We shall recall the definition of modulation spaces related to the weight functions
\begin{equation} v_s(z)=\la z\ra^s=(1+|z|^2)^{\frac s 2},\quad  s\in\R.
\end{equation}
Observe that for $\cA\in \Spnr$, $|\cA z|$ defines an equivalent
norm on $\rdd$, hence for every $s\in\R$, there exist $C_1,C_2>0$ such
that
\begin{equation}\label{pesieq}
C_1 v_s(z)\leq v_s(\cA z)\leq C_2 v_s(z),\quad \forall z\in\rdd.
\end{equation}
The time-frequency representation which occurs in the definition of modulation spaces is the short-time Fourier Transform (STFT)
of a distribution $f\in\cS'(\rd)$ with respect to a function $g\in\cS(\rd)$ (so-called window), given by
$$V_gf (z) = \langle f, \pi (z)g\rangle, \quad z=(x,\xi)\in\rdd.$$
 The  \stft\ is well-defined whenever  the bracket $\langle \cdot , \cdot \rangle$ makes sense for
dual pairs of function or distribution spaces, in particular for $f\in
\cS ' (\rd )$, $g\in \cS (\rd )$,  or for $f,g\in\lrd$.

\begin{definition}  \label{prva}
Given  $g\in\cS(\rd)$, $s\geq0$,  and $1\leq p,q\leq
\infty$, the {\it
  modulation space} $M^{p,q}_{1\otimes v_s}(\Ren)$ consists of all tempered
distributions $f\in \cS' (\rd) $ such that $V_gf\in L^{p,q}_{1\otimes v_s}(\Renn )$
(weighted mixed-norm spaces). The norm on $M^{p,q}_{1\otimes v_s}(\rd)$ is
\begin{equation}\label{defmod}
\|f\|_{M^{p,q}_{1\otimes v_s}}=\|V_gf\|_{L^{p,q}_{1\otimes v_s}}=\left(\int_{\Ren}
  \left(\int_{\Ren}|V_gf(x,\o)|^p
    dx\right)^{q/p} v_s(\o)^q d\o\right)^{1/q}  \,
\end{equation}
(with obvious modifications for $p=\infty$ or $q=\infty$).
\end{definition}
 When $p=q$, we  write $M^{p}_{1\otimes v_s}(\rd)$ instead of
 $M^{p,p}_{1\otimes v_s}(\rd)$; when $s=0$ (unweighted case) we simply write  $M^{p,q}(\rd)$ instead of
 $M^{p,q}_{1\otimes 1}(\rd)$. The spaces $M^{p,q}_{1\otimes v_s}(\rd)$ are Banach spaces,  and
 every nonzero $g\in M^{1}_{1\otimes v_s}(\rd)$ yields an equivalent norm in
 \eqref{defmod}, so that their definition is  independent of the choice
 of $g\in  M^{1}_{1\otimes v_s}(\rd)$. We shall use modulation spaces as symbol spaces, so the dimension of the spaces will be $\rdd$ instead of $\rd$. Moreover, in our setting $p=q=\infty$ (similar results occur for symbols in the  weighted Sj\"ostrand classes $M^{\infty,1}_{1\otimes v_s}(\rdd)$, $s\geq 0$).\par
 The modulation spaces $M^{\infty}_{1\otimes v_s}(\rd)$ are invariant under linear and, in particular, symplectic transformations. This property is crucial to infer our main result and  is proved in  \cite[Lemma 2.2]{EFsurvey} (see also \cite[Lemma 2.2]{MetapWiener13}) for the case of symplectic transformations. The proof for linear transformations goes exactly in the same way, just by adding $|\det\A|$  in formula \eqref{eq:kh2},  which is a consequence of a change of variables (observe that $|\det\A|=1$ if $\A$ is a symplectic matrix). We denote by $GL(2d,\R)$ the class of $2d\times 2d$ invertible matrices. Then we can state:
 \begin{lemma} \label{lkh1}
   If $\sigma \in M^\infty _{1\otimes v_s}(\rdd)$ and $\A \in GL(2d,\R)$,
then $\sigma \circ \A \in M^\infty _{1\otimes v_s}(\rdd)$ and
\begin{equation}
  \label{eq:kh2}
\|\sigma \circ \A \inv \|_{M^\infty _{1\otimes v_s}} \leq   |\det\A| \, \|(\A ^T)\inv
\|^s \, \|V_{\Phi
  \circ \A  } \Phi \|_{L^1_{v_s} } \|\sigma   \|_{M^\infty _{1\otimes v_s}},
\end{equation}
where $\Phi\in\cS(\rdd)$ is the window used to compute the norms of $\sigma$ and $\sigma \circ \A \inv $.
\end{lemma}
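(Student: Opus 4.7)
The strategy is to reduce everything to the corresponding estimate for the window switched by $\A$, using the fact that a dilation by an invertible linear map acts nicely on the STFT. First I would write the defining norm of $\sigma\circ\A^{-1}$ with the window $\Phi$ and perform the substitution $y=\A^{-1}x$ inside the STFT integral
$$V_\Phi(\sigma\circ\A^{-1})(z,\zeta)=\int_{\rdd}\sigma(\A^{-1}x)\overline{\Phi(x-z)}e^{-2\pi i x\cdot\zeta}\,dx.$$
The change of variables produces the Jacobian factor $|\det\A|$ (trivial in the symplectic case, which is why the symplectic proof in \cite{EFsurvey,MetapWiener13} does not exhibit it) and rearranges the exponential and the window, yielding the identity
$$V_\Phi(\sigma\circ\A^{-1})(z,\zeta)=|\det\A|\;V_{\Phi\circ\A}\sigma\bigl(\A^{-1}z,\A^T\zeta\bigr).$$

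Next I would take the supremum over $(z,\zeta)\in\R^{4d}$ weighted by $v_s(\zeta)$. Setting $w=\A^{-1}z$ and $\eta=\A^T\zeta$, so that $\zeta=(\A^T)^{-1}\eta$, and applying the elementary polynomial estimate $v_s((\A^T)^{-1}\eta)\leq\|(\A^T)^{-1}\|^s v_s(\eta)$, I would arrive at
$$\|\sigma\circ\A^{-1}\|_{M^\infty_{1\otimes v_s}}\leq|\det\A|\,\|(\A^T)^{-1}\|^s\sup_{w,\eta}|V_{\Phi\circ\A}\sigma(w,\eta)|\,v_s(\eta).$$
The quantity on the right is the $M^\infty_{1\otimes v_s}$ norm of $\sigma$ computed with the \emph{transformed} window $\Phi\circ\A$, so it remains to switch back to the reference window $\Phi$.

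The final step is the standard change-of-window inequality for the STFT, which reads (after suitable normalization of $\Phi$)
$$|V_{\Phi\circ\A}\sigma(w,\eta)|\leq C\bigl(|V_\Phi\sigma|\ast|V_{\Phi\circ\A}\Phi|\bigr)(w,\eta).$$
Combined with the submultiplicativity $v_s(\eta)\leq v_s(\eta-\eta')v_s(\eta')$ and the elementary bound $\sup_w(F\ast G)(w,\eta)\leq\int\sup_{w'}F(w',\eta')\cdot\sup_{w''}G(w'',\eta-\eta')\,d\eta'$, one obtains
$$\sup_{w,\eta}|V_{\Phi\circ\A}\sigma(w,\eta)|v_s(\eta)\leq \|V_{\Phi\circ\A}\Phi\|_{L^1_{v_s}}\,\|\sigma\|_{M^\infty_{1\otimes v_s}}.$$
Since $\Phi,\Phi\circ\A\in\cS(\rdd)$, the kernel $V_{\Phi\circ\A}\Phi$ lies in $L^1_{v_s}$, so all quantities are finite. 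Chaining the three estimates yields \eqref{eq:kh2}.

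The main technical obstacle is the window-switching step, because the natural window $\Phi\circ\A$ forced on us by the change of variables is \emph{not} the window used in the definition of the norm on the right-hand side. The reason the bound is sharp enough to absorb the $\A$-dependence is precisely the submultiplicativity of $v_s$, which decouples the weighted sup of the convolution into the weighted sup of $V_\Phi\sigma$ times the weighted $L^1$ norm of the transition kernel $V_{\Phi\circ\A}\Phi$; all the dependence on $\A$ beyond $|\det\A|$ and $\|(\A^T)^{-1}\|^s$ is packaged into this $L^1_{v_s}$-norm, exactly as stated.
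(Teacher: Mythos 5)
Your proof is correct and follows essentially the same route as the paper, which does not write out the argument but invokes the symplectic case of \cite[Lemma 2.2]{EFsurvey} and notes that the only change for general $\A\in GL(2d,\R)$ is the Jacobian $|\det\A|$ produced by the substitution in the STFT integral --- exactly your covariance identity $V_\Phi(\sigma\circ\A^{-1})(z,\zeta)=|\det\A|\,V_{\Phi\circ\A}\sigma(\A^{-1}z,\A^T\zeta)$ followed by the change-of-window step. The only blemish is your displayed auxiliary bound, where $\sup_{w''}G(w'',\eta-\eta')$ should be $\int |G(w'',\eta-\eta')|\,dw''$ (the mixed-norm Young inequality $L^{\infty,\infty}\ast L^{1,1}\subset L^{\infty,\infty}$, which cannot be replaced by a product of suprema); with that correction the chain yields precisely the stated constant $\|V_{\Phi\circ\A}\Phi\|_{L^1_{v_s}}$.
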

In the sequel it will be useful to pass from the Weyl to the Kohn-Nirenberg form of a pseudodifferential operator. The latter form can be formally defined by
$$\sigma(x,D) f(x)=\intrd e^{2 \pi i x\cdot \xi} \sigma(x,\xi) \hat f(\xi)\,d\xi,
$$
for a suitable symbol $\sigma$ on $\rdd$.
The previous correspondences are related by $\sigma^w(x,D)=(\mathcal{U}\sigma)(x,D)$, where \begin{equation}\label{U}\widehat{\mathcal{U}\sigma}(\eta_1,\eta_2)=e^{\pi i
  \eta_1\cdot \eta_2}\widehat{\sigma}(\eta_1,\eta_2)
  \end{equation}
   (see, e.g., \cite[formula (14.17)]{book}).
 The classes $M^\infty _{1\otimes v_s}(\rdd)$ are invariant under the action of the unitary operator $\mathcal{U}$, as shown below.
 \begin{lemma}\label{invM} If  $\sigma \in M^\infty _{1\otimes v_s}(\rdd)$ then $\mathcal{U}\sigma \in M^\infty _{1\otimes v_s}(\rdd)$ with
 $$\|\mathcal{U}\sigma\|_{M^\infty _{1\otimes v_s}}\leq C \|\sigma\|_{ M^\infty _{1\otimes v_s}}.$$
 \end{lemma}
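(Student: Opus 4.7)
The plan is to compute the short-time Fourier transform of $\mathcal{U}\sigma$ explicitly and identify it, up to a pure phase, with the STFT of $\sigma$ evaluated at a phase-space point obtained by a shear that preserves the second (``frequency'') variable. Since the weight $1\otimes v_s$ depends only on that variable, the shear is irrelevant for the norm.

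\medskip

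\noindent\textbf{Step 1 (Parseval).} Fix a Gaussian $g\in\cS(\rdd)$. Write
\[
V_g(\mathcal{U}\sigma)(z,\zeta)=\langle\mathcal{U}\sigma,\pi(z,\zeta)g\rangle=\int_{\rdd}e^{\pi i\eta_1\cdot\eta_2}\hat\sigma(\eta)\,\overline{\widehat{\pi(z,\zeta)g}(\eta)}\,d\eta
\]
using the definition of $\mathcal{U}$ via \eqref{U}. A direct calculation gives $\widehat{M_\zeta T_z g}(\eta)=e^{2\pi i z\cdot\zeta}e^{-2\pi i z\cdot\eta}\hat g(\eta-\zeta)$, which brings $V_g(\mathcal{U}\sigma)(z,\zeta)$ into the form
\[
e^{-2\pi i z\cdot\zeta}\int_{\rdd}e^{\pi i\eta_1\cdot\eta_2}\hat\sigma(\eta)\,e^{2\pi i z\cdot\eta}\,\overline{\hat g(\eta-\zeta)}\,d\eta.
\]

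\medskip

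\noindent\textbf{Step 2 (centring the chirp).} Expand $\pi\eta_1\eta_2=\pi(\eta_1-\zeta_1)(\eta_2-\zeta_2)+\pi(\eta_1\zeta_2+\zeta_1\eta_2)-\pi\zeta_1\zeta_2$ and absorb the first term into a new window $\tilde g(u):=e^{-\pi i u_1\cdot u_2}\hat g(u)$. Because the chirp $e^{-\pi i u_1 u_2}$ is smooth with derivatives of polynomial growth, $\tilde g\in\cS(\rdd)$. Setting $L:=\begin{pmatrix}0 & I_d\\I_d&0\end{pmatrix}$, the cross-terms combine with the modulation $e^{2\pi i z\cdot\eta}$ into $e^{2\pi i\eta\cdot(z+L\zeta/2)}$, so that, up to a unimodular phase $e^{i\Psi(z,\zeta)}$ that will drop out on taking absolute values,
\[
V_g(\mathcal{U}\sigma)(z,\zeta)=e^{i\Psi(z,\zeta)}\,V_{\tilde g}\hat\sigma\!\left(\zeta,\,-z-\tfrac{1}{2}L\zeta\right).
\]

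\medskip

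\noindent\textbf{Step 3 (back from the Fourier side).} The fundamental identity of time-frequency analysis, $V_h\hat f(u,v)=e^{-2\pi i u\cdot v}V_{\mathcal{F}^{-1}h}f(-v,u)$, applied with $h=\tilde g$ and $f=\sigma$, yields, with $G:=\mathcal{F}^{-1}\tilde g\in\cS(\rdd)$,
\[
|V_g(\mathcal{U}\sigma)(z,\zeta)|=\Bigl|V_G\sigma\!\left(z+\tfrac{1}{2}L\zeta,\,\zeta\right)\Bigr|.
\]
Hence the map $\mathcal{U}$ induces the shear $(z,\zeta)\mapsto(z+\tfrac12 L\zeta,\zeta)$ on the phase space $\R^{4d}$.

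\medskip

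\noindent\textbf{Step 4 (conclusion).} Take the $L^\infty_{1\otimes v_s}$-norm. Since $v_s(\zeta)$ depends only on $\zeta$, which is left invariant by the shear, the change of variable $z'=z+\tfrac12 L\zeta$ gives
\[
\|\mathcal{U}\sigma\|_{M^\infty_{1\otimes v_s}}=\sup_{z,\zeta}v_s(\zeta)\,|V_G\sigma(z+\tfrac12 L\zeta,\zeta)|=\sup_{z',\zeta}v_s(\zeta)\,|V_G\sigma(z',\zeta)|.
\]
By the independence of the $M^\infty_{1\otimes v_s}$-norm from the choice of window in $\cS(\rdd)$ (recalled after Definition~\ref{prva}), the last quantity is equivalent to $\|\sigma\|_{M^\infty_{1\otimes v_s}}$, giving the desired bound $\|\mathcal{U}\sigma\|_{M^\infty_{1\otimes v_s}}\leq C\|\sigma\|_{M^\infty_{1\otimes v_s}}$.

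\medskip

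The main obstacle is the bookkeeping in Step~2: one has to route the chirp $e^{\pi i\eta_1\eta_2}$ through the window shift correctly, producing exactly a Schwartz window $\tilde g$ and a \emph{pure} modulation in $\eta$. The structural content, however, is transparent: the Fourier multiplier $\mathcal{U}$ is (up to a sign) a metaplectic operator on $\rdd$ whose canonical transformation is a symplectic shear that leaves the frequency variable intact, which is precisely the variable along which the anisotropic weight $1\otimes v_s$ lives.
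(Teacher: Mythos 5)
Your argument is correct, and it is in substance the paper's own proof: the paper recognizes $\mathcal{U}=\mu(\mathcal{D})$ for the symplectic shear $\mathcal{D}=\begin{pmatrix}I_{2d}&C\\0_{2d}&I_{2d}\end{pmatrix}$ and invokes the covariance $V_{\mu(\mathcal{D})\Phi}(\mu(\mathcal{D})\sigma)(z,\zeta)=V_\Phi\sigma(z-C\zeta,\zeta)$, which is exactly the identity your Steps 1--3 derive by hand (your $\tfrac12 L$ is the paper's $C$, and the sign of the shear is immaterial for the supremum). Both proofs then conclude identically: the shear fixes $\zeta$, so the weight $v_s(\zeta)$ is untouched, and window independence of the $M^\infty_{1\otimes v_s}$-norm finishes the estimate.
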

 \begin{proof}  Observe that, up to a constant $c$ with $$\mathcal{U} \sigma(z)=\cF^{-1}e^{\pi i z\cdot C z}\cF\sigma=\mu(J \begin{pmatrix}I_{2d} & 0_{2d} \\
-C & I_{2d}
\end{pmatrix}J^T) \sigma=\mu(\mathcal{D})\sigma$$  where
  $C= \begin{pmatrix}0_{d} & 1/2 \,I_{d} \\
1/2\, I_{d} & 0_{d}
\end{pmatrix}$ and $\mathcal{D}= \begin{pmatrix}I_{2d} &C \\
0_{2d} & I_{2d}
\end{pmatrix}\in Sp(2d,\R)$.  Consider now a window function $\Phi\in\cS(\rdd)$.
A straightforward computation shows
$$V_{\mu(\mathcal{D})\Phi} (\mu(\mathcal{D}))\sigma (z,\zeta)=V_\Phi f (\mathcal{D}^{-T}(z,\zeta))=V_\Phi f (z-C \zeta,\zeta).$$
Since $\mu(\mathcal{D})\Phi\in\cS(\rdd)$ and different window functions yield equivalent norms, we obtain
\begin{align*}\|\mathcal{U}\sigma\|_{M^\infty _{1\otimes v_s}}&\leq C \|V_{\mu(\mathcal{D})\Phi} \mu(\mathcal{D})\sigma\|_{L^\infty _{1\otimes v_s}}=\sup_{z,\zeta\in\rdd}
|V_\Phi f (z-C \zeta,\zeta)|v_s(\zeta)\\&=\|V_\Phi \sigma\|_{L^\infty _{1\otimes v_s}}=\|\sigma\|_{M^\infty _{1\otimes v_s}}
\end{align*}
as desired.
 \end{proof}

\section{Integral representations of generalized metaplectic operators}\label{formula}
The aim of this section is to give  integral representations for generalized metaplectic operators $T\in FIO(\A, v_s)$, extending the integral representations \eqref{eq:kh22} in Theorem \ref{tc2}, valid only in the special case $\det A\not=0$.
To obtain integral representations for generalized metaplectic operators $T\in FIO(\A^T,v_s)$, we use the characterization  of generalized metaplectic operators
of Theorem \ref{tc2} and we write $T=\sigma^w(x,D)\mu(\A)$ where $\sigma^w(x,D)$ is a Weyl operator with symbol $\sigma\in M^{\infty}_{1\otimes v_s}(\rdd)$. Then we study the composition of a pseudodifferential operator in the Weyl form with a metaplectic operator whose integral representation is given by Theorem \ref{aJ}.\par
 Define for a $d\times d$  matrix $A$ the pre-image of a linear subspace $L$ of $\rd$:
\begin{equation}\label{spaceL}
\overleftarrow{A}(L)=\{ x\in\rd\,:\, Ax\in L\}.
\end{equation}
The following property  will be useful to study the previous composition.
\begin{proposition}\label{P1}
Assume $\A\in\Spnr$ admits the block decomposition \eqref{block}. Then
\begin{align}
\overleftarrow{C^T}(R(A^T))&=R(A)\label{M1}\\
\dim\, C(N(A))&=\dim\, N(A) \label{M2}\\
\overleftarrow{B}(R(A))&=R(A^T)\label{M4}\\
B^T(N(A^T))&=N(A). \label{M6}
\end{align}
\end{proposition}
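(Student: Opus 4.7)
The plan is to treat the four identities in a unified way, reducing (M1) and (M4) via the elementary orthogonal-complement formula
$$\overleftarrow{M}(L)=\bigl(M^T(L^\perp)\bigr)^\perp,$$
valid for any $d\times d$ matrix $M$ and subspace $L\subseteq\rd$ (it follows in one line: $Mx\in L\iff\langle Mx,y\rangle=0$ for all $y\in L^\perp\iff\langle x,M^T y\rangle=0$ for all $y\in L^\perp$). Applied with $M=C^T$, $L=R(A^T)$ and using $R(A^T)^\perp=N(A)$, this reduces (M1) to the claim $C(N(A))=N(A^T)$; applied with $M=B$, $L=R(A)$ and using $R(A)^\perp=N(A^T)$, it reduces (M4) to the claim $B^T(N(A^T))=N(A)$, which is precisely (M6). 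So it suffices to establish (M2), $C(N(A))=N(A^T)$, and (M6).

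First I would prove (M2). Restricting \eqref{e1} to $N(A)$ gives $-B^T Cy=y$ for every $y\in N(A)$, so $C$ is injective on $N(A)$ and $\dim C(N(A))=\dim N(A)$. Next, for $y\in N(A)$, relation \eqref{e2} gives $A^T Cy=C^T Ay=0$, hence $C(N(A))\subseteq N(A^T)$. Since $\dim N(A)=\dim N(A^T)$ (rank of $A$ equals rank of $A^T$), the inclusion is an equality, which via the orthogonal-complement reduction yields (M1).

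For (M6), relation \eqref{e5} gives $AB^T y=BA^T y=0$ for $y\in N(A^T)$, so $B^T(N(A^T))\subseteq N(A)$. Injectivity of $B^T$ restricted to $N(A^T)$ follows from the $(2,2)$-block of $\A\A^{-1}=I_{2d}$, namely $DA^T-CB^T=I_d$: for $y\in N(A^T)$, $B^T y=0$ forces $y=DA^T y-CB^T y=0$. A dimension count as before promotes the inclusion to equality, proving (M6); the orthogonal-complement reduction then gives (M4).

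No step is technically deep; the main obstacle is essentially bookkeeping, namely choosing the correct symplectic identity for each inclusion. The only subtle point is that one must use not only the relations \eqref{e1}--\eqref{e5} coming from $\A J\A^T=J$, but also the companion identities obtained from $\A\A^{-1}=I_{2d}$, since $\A^{-1}$ is symplectic as well; once all of these are in hand, each of (M1), (M2), (M4), (M6) collapses into a one- or two-line argument.
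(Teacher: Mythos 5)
Your proof is correct, and it takes a genuinely different route from the paper's. The paper disposes of all four identities in two lines by outsourcing them: it notes that $\mathcal{B}=\A J=\begin{pmatrix}-B&A\\-D&C\end{pmatrix}$ and $\mathcal{B}^{-1}$ are again symplectic and reads off \eqref{M1}--\eqref{M6} from Property 1 of \cite{MO2002} applied to these two matrices. You instead give a self-contained linear-algebra argument: the duality formula $\overleftarrow{M}(L)=\bigl(M^T(L^\perp)\bigr)^\perp$ reduces \eqref{M1} to the identity $C(N(A))=N(A^T)$ and reduces \eqref{M4} exactly to \eqref{M6}, and each remaining identity is obtained as an inclusion (from \eqref{e2} for $C$, from \eqref{e5} for $B^T$) promoted to an equality by injectivity plus a dimension count, the injectivity coming from \eqref{e1} for $C$ restricted to $N(A)$ and from the $(2,2)$ block of $\A\A^{-1}=I_{2d}$ for $B^T$ restricted to $N(A^T)$, together with $\dim N(A)=\dim N(A^T)$. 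All of these steps check out. What your approach buys is independence from \cite{MO2002} and an explicit record of which symplectic block relation is responsible for which identity; it also correctly flags that the relations \eqref{e1}--\eqref{e3} alone do not suffice and that the companion identities from $\A^{-1}$ (equivalently, the ``row'' relations such as $DA^T-CB^T=I_d$) are needed. What the paper's approach buys is brevity and consistency with the reference whose integral formulas it extends; in substance your argument amounts to a direct proof of the cited Property 1, so the two routes end at the same place by different means.
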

\begin{proof} Since the matrix $\mathcal{B}=\A J\in \Spnr$, its block decomposition satisfies \cite[Property 1]{MO2002} which gives relations \eqref{M1} and \eqref{M2}. Analogously, the matrix
$$\mathcal{B}^{-1}= \begin{pmatrix} C^T & -A^T\\-D^T & -B^T\end{pmatrix}\in\Spnr$$
satisfies \cite[Property 1]{MO2002}, so that the other relations are fulfilled.
\end{proof}\par
We are now in position  to prove Lemma \ref{pseudo-inverse}.
\begin{proof}[\bf {Proof of Lemma \ref{pseudo-inverse}}] Observe that by relation \eqref{M4}, $B: N(A)\to N(A^T)$.  By \eqref{e1}, for every $x\in N(A)$ it follows $-B^t Cx=x$, hence $N(A)\subset R(B^T)=N(B)^\bot$. This gives $N(A)\cap N(B)=\{0\}$, so $B$ is an injective mapping and $\dim\, N(A)\leq \dim\, N(A^T)$. Repeating  the same argument for the symplectic matrix \begin{equation}\label{AT}
\A^T=\begin{pmatrix} A^T&C^T\\B^T&D^T\end{pmatrix},
\end{equation}
we obtain  $\dim \, N(A^T)\leq\dim \, N(A)$, hence $\dim \,N(A)= \dim\,  N(A^T)$, i.e., $B$ is onto and its pseudo-inverse $B^{inv}: N(A^T)\to N(A)$ is well-defined.
\end{proof}\par
 Assume  that the matrix $\A\in\Spnr$ admits the block decomposition \eqref{block} with $\dim\,R(A)>0$.  We first  work on the integral representation of $\mu(\A)$ in \eqref{metap1e}.
\begin{theorem}\label{T1}
Consider  $\A\in\Spnr$ with the $2\times 2$ block decomposition in \eqref{block} and assume $\dim\,R(A)=r>0$.  For  $f\in\cS(\rd)$ and
$$x=x_1+x_2\in R(A^T)\oplus N(A)=\rd$$
we have the following integral representation
\begin{equation}\label{metap1eM}
\mu(\A)f(x)= c_1(\A)\int_{R(A)}e^{\pi i [(A^{inv})^T B x_1\cdot x_1-D^TB x_2\cdot x_2]-CA^{inv} y\cdot y+2\pi i A^{inv} y\cdot x_1} \hat{f}(y-Bx_2)\, dy
\end{equation}
where
\begin{equation}\label{c1cost}
c_1(\A)=\frac1{\sqrt{s(A) q_{N(A)}(C)}}.
\end{equation}
\end{theorem}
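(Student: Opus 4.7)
The plan is to start from the Morsche--Oonincx integral representation \eqref{metap1e}, which is an integral over $R(A^T)$, and transport it to an integral over $R(A)$ by a suitable change of variable together with a translation that depends on the $R(A^T)$-component of $x$. All algebraic identifications in the phase will then come from the symplectic relations \eqref{e1}--\eqref{e5}, the analogous relations for $\A^{-1}$, and the defining property $AA^{\mathrm{inv}}w=w$ for $w\in R(A)$.

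First, I would record the geometric ingredients. By Proposition \ref{P1}, equation \eqref{M4}, one has $B\bigl(R(A^T)\bigr)\subset R(A)$, so for the decomposition $x=x_1+x_2\in R(A^T)\oplus N(A)$ the vector $Bx_1$ lies in $R(A)$; by Lemma \ref{pseudo-inverse}, $B$ maps $N(A)$ isomorphically onto $N(A^T)$, so $Bx_2\in N(A^T)=R(A)^\perp$. Thus the substitution
\[
t=A^{\mathrm{inv}}(y+Bx_1),\qquad y\in R(A),
\]
makes sense and is a bijection $R(A)\to R(A^T)$, because $y+Bx_1\in R(A)$ and $A^{\mathrm{inv}}:R(A)\to R(A^T)$ is an isomorphism. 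Under this substitution one has immediately
\[
At-Bx=(y+Bx_1)-B(x_1+x_2)=y-Bx_2,
\]
which yields the desired argument of $\hat f$.

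Second, the constant. By Lemma \ref{lemma1} (or Corollary \ref{changevaryC}) the change of variable contributes the Jacobian $1/q_{R(A^T)}(A)=1/s(A)$, and combining with the factor $c(\A)=\sqrt{s(A)/q_{N(A)}(C)}$ from Theorem \ref{aJ} gives
\[
\frac{c(\A)}{s(A)}=\frac{1}{\sqrt{s(A)\,q_{N(A)}(C)}}=c_1(\A),
\]
matching \eqref{c1cost}.

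Third, and this is where the main work lies, one has to massage the phase
$-\pi i B^TDx\cdot x-\pi i A^TCt\cdot t+2\pi i A^TDx\cdot t$
after the substitution $t=A^{\mathrm{inv}}(y+Bx_1)$ into the form prescribed by \eqref{metap1eM}. For the quadratic in $t$, I would use $AA^{\mathrm{inv}}w=w$ on $R(A)$ to reduce
$A^TCA^{\mathrm{inv}}w\cdot A^{\mathrm{inv}}w=CA^{\mathrm{inv}}w\cdot w$,
and then note that the bilinear form $w\mapsto CA^{\mathrm{inv}}w\cdot w$ is symmetric on $R(A)$ thanks to $A^TC=C^TA$ from \eqref{e2}; this lets me expand $CA^{\mathrm{inv}}(y+Bx_1)\cdot(y+Bx_1)$ and isolate the pure $y$-quadratic term $-\pi i CA^{\mathrm{inv}}y\cdot y$. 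For the linear-in-$t$ term I would split $x=x_1+x_2$, use $A^TDx_1\cdot A^{\mathrm{inv}}w=Dx_1\cdot w$ again by $AA^{\mathrm{inv}}w=w$, and use the symplectic identity $D^TA-B^TC=I_d$ (equation \eqref{e1}) to rewrite $B^TCA^{\mathrm{inv}}y=D^Ty-A^{\mathrm{inv}}y$ on $R(A)$; this is the step that produces the term $2\pi i A^{\mathrm{inv}}y\cdot x_1$. The terms involving $x_2$ are handled by the symmetric companion identity $A^TD=I_d+C^TB$, together with $x_2\in N(A)=R(A^T)^\perp$ and $A^{\mathrm{inv}}y\in R(A^T)$, so that the $x_2$-contribution reduces to purely $y$-independent quadratics in $x_1,x_2$ whose organization, after a final use of the symmetry $B^TD=D^TB$ from \eqref{e3} and of $(A^{\mathrm{inv}})^TB$ being symmetric on $R(A^T)$, gives the two leading terms $\pi i(A^{\mathrm{inv}})^TBx_1\cdot x_1-\pi i D^TBx_2\cdot x_2$.

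The hard part will be step three: keeping track of where each quadratic or mixed term lands after the shift $y\mapsto y+Bx_1$, and recognizing that the symplectic identities \eqref{e1}--\eqref{e5} (with $A^{\mathrm{inv}}$ playing the role of $A^{-1}$ whenever one restricts to $R(A)$ or $R(A^T)$) exactly produce the symmetric bilinear forms $CA^{\mathrm{inv}}$ on $R(A)$ and $(A^{\mathrm{inv}})^TB$ on $R(A^T)$ appearing in \eqref{metap1eM}. The remaining calculations (checking symmetry of the relevant bilinear forms, expanding $CA^{\mathrm{inv}}(y+Bx_1)\cdot(y+Bx_1)$, and collecting terms) are bookkeeping but need to be done carefully because several of the matrices under consideration are singular and their pseudo-inverses are only defined on the appropriate invariant subspaces.
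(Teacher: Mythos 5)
Your plan is correct and follows essentially the same route as the paper's proof: starting from \eqref{metap1e}, substituting $t=A^{inv}(y+Bx_1)$ (the paper writes it as $y=At-Bx_1$ and invokes Corollary \ref{changevaryC}), extracting the constant $c(\A)/s(A)=c_1(\A)$, and then simplifying the phase via \eqref{e1}, the symmetry of $CA^{inv}$ on $R(A)$, and the orthogonality facts $Bx_2, Dx_2\in N(A^T)=R(A)^\perp$ and $Dx_1, Bx_1\in R(A)$ that kill the cross terms. The only cosmetic difference is that the paper derives the symmetry of $CA^{inv}$ from \eqref{e4} rather than \eqref{e2}, and isolates the mapping property $D\colon N(A)\to N(A^T)$ as an explicit intermediate step.
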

\begin{proof} Since $\dim\,R(A)=r>0$, the integral representation of $\mu(\A)$ is given by \eqref{metap1e}.  We set
$$Q(x):=\int_{R(A^T)}e^{-\pi i A^TC t\cdot t+2\pi i A^TD x\cdot t} \hat{f}(At-Bx)\, dt.$$
We write $x=x_1+x_2$, with $x_1\in R(A^T)$ and $x_2\in N(A)$. By relation \eqref{M4} we obtain $Bx_1\in R(A)$. Making the change of
variables $y=At-Bx_1$ and applying Corollary \ref{changevaryC}  the integral $Q(x)$ becomes
\begin{align*}
Q(x)=\frac1{q_{R(A^T)}(A)}&\int_{R(A)}e^{-\pi iC (A^{inv}y+A^{inv} B x_1)\cdot (y+Bx_1)+2\pi i D (x_1+x_2)\cdot(y+Bx_1)}\\
&\quad\quad\quad\cdot \hat{f}(y-Bx_2)\, dy,
\end{align*}
where $q_{R(A^T)}(A)=s(A)$.   By the equality \eqref{e1} we obtain $C A^{inv} B x_1-D x_1=-(A^{inv})^T x_1$ and
relation \eqref{e4} yields $(C A^{inv})^T=C A^{inv}$, so that we can write
\begin{align}\label{muA}
\mu(\A)f(x_1+x_2)=\frac{c(\A)}{s(A)} &\,\,e^{\pi i (A^{inv})^T x_1\cdot  B x_1+\pi i (D x_2\cdot Bx_1-Dx_1\cdot B x_2-Dx_2\cdot Bx_2)}\\
&\quad\cdot\quad\int_{R(A)}e^{-\pi iC A^{inv}y\cdot y+2\pi i (A^{inv})^T x_1\cdot y+ Dx_2\cdot y)}\hat{f}(y-Bx_2)\, dy\notag.
\end{align}
Observe that
$$\frac{c(\A)}{s(A)}=\sqrt{\frac{s(A)}{q_{N(A)}(C)}} \frac{1}{s(A)}=\frac{1}{\sqrt{s(A) q_{N(A)}(C)}},$$
which is \eqref{c1cost}.\par
Now, we shall prove that the $d\times d$ block $D$ satisfies
\begin{equation}
\label{D}
D\,:\, N(A)\to N(A^T).
\end{equation}
First, by Lemma \ref{pseudo-inverse}, $B:\,N(A)\to N(A^T)$ whereas by \eqref{M1} it follows $C^T:  N(A^T)\to N(A)$. Hence, using \eqref{e1}, for $x_2\in N(A)$, we obtain
$$ A^TD x_2=C^T B x_2+ x_2\in N(A).$$
Now $A^T$ maps $R(A)$ onto $R(A^T)$ bijectively, this implies    $Dx_2\in R(A)^\bot=N(A^T)$ and \eqref{D} is proved.
Relation \eqref{D} yields $D x_2 \cdot y=0 $ for $y\in R(A)$ and
$D x_2 \cdot B x_1=0$ since $B x_1\in R(A)$ whenever $x_1\in R(A^T)$. Moreover $C^T B x_1\in R(A^T)$, by relation \eqref{M1}, so that
$$ A^T D=C^TB+I_d :\, R(A^T)\to R(A^T).$$
This gives $D x_1\in R(A)$ whenever $x_1\in R(A^T)$, and
$D x_1 \cdot B x_2=0$, for $B x_2\in N(A^T)=R(A)^\bot$ by relation \eqref{M4}.
These observations allow to simplify the expression of $\mu(\A) f(x_1+x_2)$ in \eqref{muA} and give the representation  \eqref{metap1eM}, as desired.
\end{proof}
\begin{remark}\label{36}  If $\dim\,R(A)=d$, that is $A$ is nonsingular, then $N(A)=\{0\}$,  $R(A)=\rd$, $x_2=0$,  $x=x_1$, $s(A)=|\det A|$, $q_{N(A)}(C)=1$ so that  $c_1(\A)=|\det A|^{-1/2}$. Hence
the integral representation \eqref{metap1eM} coincides with \eqref{eq:kh22M}, as expected.
\end{remark}
We now possess all the instruments to prove our main result.
\begin{proof}[\bf {Proof of Theorem \ref{main}}] By Theorem \ref{tc2}, a linear operator $T$ belongs to the class $FIO(\A^T,v_s)$ if and only if there exists
a symbol $\sigma_1\in M^{\infty}_{1\otimes v_s}$ such that $T=\sigma_1^w(x,D)\mu(\A)$. Consider $\A$ with the block decomposition \eqref{block}.
 Observe that the symbols involved in the sequel are the results of compositions of  symbols in $ M^{\infty}_{1\otimes v_s}(\rdd)$ with  suitable symplectic transformations, so that by Lemma \ref{lkh1} they all belong to the same class $M^{\infty}_{1\otimes v_s}(\rdd)$. \par
 First, assume $0<r=\dim\,R(A)$. We shall prove that the composition $T=\sigma_1^w(x,D)\mu(\A)$ admits the integral representation in \eqref{eq:khee}.
We use the integral representation of the metaplectic operator $\mu(\A)$ in \eqref{metap1eM}. Setting
\begin{equation}\label{Pop}
Pf(x_1+x_2):= \int_{R(A)}e^{-\pi i C A^{inv} y\cdot y+2\pi i A^{inv} y\cdot x_1} \hat{f}(y-Bx_2)\, dy.
\end{equation}
 we will show that \begin{equation}\label{N1}\sigma_1^w(x,D)\mu(\A)f(x_1+x_2)= e^{\pi i [A^{inv}B x_1 x_1\cdot  x_1-B^T Dx_2\cdot x_2]}\sigma_2^w(x,D)Pf(x_1+x_2),\end{equation}
  where, for $x=x_1+x_2,\,\xi=\xi_1+\xi_2\in R(A^T)\oplus N(A)$, we define
\begin{equation}\label{sigma2}\sigma_2(x_1+x_2,\xi_1+\xi_2)=c_1(\A)\sigma_1(x_1+x_2, A^{inv} B x_1+\xi_1-D^T B x_2+\xi_2).\end{equation}
Indeed, define on $ \rd= R(A^T)\oplus N(A)$ the symplectic matrix $\mathcal{C}\in Sp(d,\R)$ as follows
$$\mathcal{C}=\begin{pmatrix}I_r & 0_{d-r} & 0_r & 0_{d-r}\\
0_r & I_{d-r} & 0_r & 0_{d-r}\\
-A^{inv}B & 0_{d-r} & I_r & 0_{d-r}\\
0_r & D^T B & 0_r & I_{d-r}
\end{pmatrix}
$$
(observe that the $d\times d$ block $\begin{pmatrix} -A^{inv}B & 0_{d-r} \\0_r, D^T B \end{pmatrix}$ is a symmetric matrix, by relations \eqref{e5} and \eqref{e3}). The  inverse of $\mathcal{C}$  is
$$\mathcal{C}^{-1}=\begin{pmatrix}I_r & 0_{d-r} & 0_r & 0_{d-r}\\
0_r & I_{d-r} & 0_r & 0_{d-r}\\
A^{inv}B & 0_{d-r} & I_r & 0_{d-r}\\
0_r & -D^T B & 0_r & I_{d-r}
\end{pmatrix}
.$$
We have that $\mu(\mathcal{C}) f(x_1+x_2)= e^{\pi i [A^{inv}B x_1 x_1\cdot  x_1-B^T Dx_2\cdot x_2]} f(x_1+x_2)$, by relation \eqref{lower}, so that
$\sigma_1^w(x,D)\mu(\mathcal{C}) = \mu(\mathcal{C}) (\sigma_1\circ \mathcal{C}^{-1})^w$ by means of \eqref{weylchirp}. The equality
\eqref{N1} immediately follows.\par
Next, we pass from the Weyl to the Kohn-Nirenberg form of a pseudodifferential operator: $\sigma_2^w(x,D)=\sigma_3(x,D)$, for the new symbol  $\sigma_3=\mathcal{U} \sigma_2$ where  $\mathcal{U}$ is defined in \eqref{U}.
 Hence $\sigma_3 \in  M^{\infty}_{1\otimes v_s}(\rdd)$ by Lemma \ref{invM}.
Using $x=x_1+x_2,\,\xi=\xi_1+\xi_2\in R(A^T)\oplus N(A)=\rd$, we can express the operator $\sigma_3(x,D)$ by means of integrals over the subspaces $R(A^T)$ and $N(A)$: for every $\f\in\cS(\rd)$,
  $$\sigma_3(x,D)\f(x_1+x_2)\!=\!\int_{R(A^T)} \int_{N(A)} e^{2\pi i x_2\cdot \xi_2} e^{2\pi i x_1\cdot \xi_1}\sigma_3(x_1+x_2,\xi_1+\xi_2)\hat{\f}(\xi_1+\xi_2)\,d\xi_2\, d\xi_1.$$
 The previous decomposition helps to compute $\sigma_3(x,D)Pf(x)$, where the operator $P$ is defined in \eqref{Pop}. Indeed,
 computing first the integral over $R(A^T)$, we obtain
\begin{align*}\int_{R(A^T)} & e^{2\pi i x_1\cdot \xi_1}\sigma_3(x_1+x_2,\xi_1+\xi_2)\cF_{R(A^T)}(e^{2\pi i A^{inv}y\cdot x_1})(\xi_1)\,d\xi_1\\
&= e^{2\pi i A^{inv}y\cdot x_1}\sigma_3(x_1+x_2,A^{inv}y+\xi_2),\end{align*}
and the expression of $\sigma_3(x,D)Pf(x)$ reduces to
 \begin{align*}\sigma_3(x,D)Pf(x_1+x_2)=\int_{R(A)}& e^{2\pi i x_1\cdot A^{inv} y-\pi i CA^{inv}y\cdot y}\left(\int_{N(A)} e^{2\pi i x_2\cdot \xi_2 }
\sigma_3(x_1+x_2,A^{inv}y+\xi_2)\right.\\& \quad\cdot  \left.\left(\int_{N(A)} e^{-2\pi i \xi_2\cdot t} \hat{f}(y-Bt)\, dt\right)\,d\xi_2\right)\, dy\\
=\int_{R(A)}& e^{2\pi i x_1\cdot A^{inv} y-\pi i CA^{inv}y\cdot y}\left(\int_{N(A)} e^{2\pi i x_2\cdot \xi_2 }
\sigma_3(x_1+x_2,A^{inv}y+\xi_2)\right.\\& \quad\cdot  \left.\left(\frac1{q_{N(A)}(B)}\int_{N(A^T)} e^{2\pi i (B^{inv})^T\xi_2\cdot z} \hat{f}(y+z)\, dz\right)\,d\xi_2\right)\, dy,\end{align*}
where the last equality is the consequence of Lemma \ref{lemma1}, with the change of variables $z=-Bt$ and using  Lemma \ref{pseudo-inverse}. Observe that the transpose of $B^{inv}$, denoted by $(B^{inv})^T$, maps $N(A)$ to $N(A^T)$. Finally, the Fourier inversion formula on the subspace $R(A^T)$ gives the desired result in \eqref{eq:khee}, with symbol  $\sigma=\frac1{q_{N(A)}(B)}\sigma_3$.\par
 Consider now the case $\dim\,R(A)=0$. Then the block $B$ is nonsingular and the matrix $\A$ is the one in \eqref{range0},
whereas the integral representation of $\mu(\A)$ is given by \eqref{metap0e}. Using similar arguments as in the previous case, we
compute $Tf(x)=\sigma_1^w(x,D)\mu(\A)f(x)$. We observe that $\sigma_1^w(x,D)(e^{-\pi i B^T D x\cdot x})= e^{-\pi i B^T D x\cdot x} \sigma_4^w(x,D)$
where $\sigma_4(x,\xi)=\sigma_1(x,\xi-B^TDx)$; this follows by the relation  $\sigma_1^w(x,D)\mu(\mathcal{E})=\mu(\mathcal{E})(\sigma\circ \mathcal{E}^{-1})^w(x,D)$,
with
$\mathcal{E}=\begin{pmatrix}I_d & 0_d\\B^TD &I_d\end{pmatrix}\in\Spnr$ and $\mathcal{E}^{-1}=\begin{pmatrix}I_d & 0_d\\-B^TD &I_d\end{pmatrix}.$
Next we rewrite $\sigma_4^w(x,D)$ in the Kohn-Nirenberg form $\sigma_5(x,D)$, with $\sigma_5=\mathcal{U}\sigma_4$, and the operator $\mathcal{U}$ defined in
\eqref{U}. Finally, since $\sigma_5(x,D) (e^{2\pi i x\cdot B^T t})=\sigma_5(x,B^T t)$, we obtain the representation \eqref{metap0eP}. This formula can be recaptured  from \eqref{eq:khee} when $R(A)=\{0\}$, $y=0$ so that $N(A)=\rd$. The block $B$ is invertible on $\rd$, hence $B^{inv}=B^{-1}$, and making the change of variables $B^{-T}\xi_2=\eta$ we obtain the claim. This completes the proof.
\end{proof}
\begin{proof}[\bf {Proof of Corollary \ref{mainC}}]
Item $(i)$ is already proved in Theorem \ref{main}.\par
$(ii)$ If $\dim \,R(A)=d$, that is the block $A$ is nonsingular, then $N(A)$ is the null space, $A^{inv}=A^{-1}$, the inverse of $A$ on $\rd$, $x_2=\xi_2=0$ so that $x_1=x\in\rd$.  In this case the operator  $T$ reduces to the following representation
\begin{align*}Tf(x)&= \intrd e^{\pi i  A^{-1} B x\cdot x+ 2\pi i x \cdot A^{-1} y-\pi i CA^{-1}y\cdot y}
\sigma(x,A^{-1}y)\hat{f}(y)\,\, dy\\
& =\intrd e^{2\pi i \Phi_T(x,y)} \tilde{\sigma}(x,y)\hat{f}(y)\,dy
\end{align*}
where the phase function $\Phi_T$ is the one defined in \eqref{fase2}. Observe that the phase $\Phi_T$ is the generating function of the canonical transformation $\cA^T$ (see Remark \ref{33}). Moreover, by Lemma \ref{lkh1}, the symbol  $\tilde{\sigma}(x,y)=\sigma(x,A^{-1}y)$ is in $M^{\infty}_{1\otimes v_s}(\rdd)$, whenever $\sigma \in M^{\infty}_{1\otimes v_s}(\rdd)$. We then recapture the integral representation of $T$ in \eqref{eq:kh22}, as expected.
\end{proof}

\section{Applications to Schr\"odinger equations}\label{Sesempio}
We now focus on the Cauchy problem for the anisotropic harmonic oscillator stated in \eqref{5.31eq}.
The main result of \cite{EFsurvey} says that the propagator is a generalized metaplectic operator. Let us first recall this issue. Consider the Cauchy problem
\begin{equation}\label{C1}
\begin{cases} i \displaystyle\frac{\partial
u}{\partial t} =(a^w(x,D)+\sigma^w(x,D))u\\
u(0,x)=u_0(x),
\end{cases}
\end{equation}
where the  hamiltonian  $a^w(x,D)$ is the Weyl quantization of a real-valued homogeneous  quadratic polynomial and $\sigma^w(x,D)$ is a pseudodifferential operator with a symbol $\sigma\in M^{\infty}_{1\otimes v_s}(\rdd)$, $s>2d$. Then, a simplified version of \cite[Theorem5.1]{EFsurvey} reads as follows
\begin{theorem}\label{teofinal}
Consider the Cauchy problem \eqref{C1}  above and set $\tilde{H}=a^w(x,D)+\sigma^w(x,D)$. Then the evolution operator $e^{-it \tilde{H}}$ is a generalized metaplectic operator for every $t\in\bR$.
Specifically, we have
\begin{equation}\label{evolution}
e^{-it{H}} = \mu(\mathcal{A}_t)b^w_{1,t}(x,D)=b^w_{2,t}(x,D)\mu(\mathcal{A}_t),\quad t\in\bR
\end{equation}
for some symbols $b_{1,t},b_{2,t}\in M^{\infty}_{1\otimes v_s}(\rdd)$ and where $\mu(\mathcal{A}_t)=e^{-i t a^w(x,D)}$ is the solution to the unperturbed problem ($\sigma^w(x,D)=0$). In particular, for $1\leq p\leq \infty$, if the initial datum  $ u_0\in M^p$, then $u(t,\cdot )
= e^{-it\tilde{H}}u_0\in  M^p$, for all $t\in\bR$.\end{theorem}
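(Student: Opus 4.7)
The plan is to reduce the perturbed Schr\"odinger evolution to one governed purely by a symbol in the bounded class $M^{\infty}_{1\otimes v_s}(\rdd)$ by conjugating out the quadratic part, and then to solve the resulting equation inside the Banach algebra of Weyl operators with such symbols. First I would treat the unperturbed propagator $e^{-ita^w(x,D)}$: since $a$ is a real, homogeneous quadratic polynomial, its Weyl quantization is self-adjoint and generates, via Stone's theorem, a one-parameter unitary group whose underlying Hamiltonian flow is linear and symplectic. By the classical Shale--Weil / metaplectic correspondence this group coincides with $\mu(\mathcal{A}_t)$, where $\mathcal{A}_t\in Sp(d,\bR)$ is the symplectic flow of $a$; and by the remarks preceding Theorem \ref{tc2}, $\mu(\mathcal{A}_t)\in FIO(\mathcal{A}_t^T,v_s)$ for every $s\geq 0$.

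Next I would introduce $U(t):=\mu(\mathcal{A}_t)^{-1}e^{-it\tilde{H}}$, with $\tilde{H}=a^w(x,D)+\sigma^w(x,D)$. Differentiating in $t$ and using the intertwining identity \eqref{weylchirp} for the perturbation, one checks that $U(t)$ solves
\[
i\,\partial_t U(t)=\tau_t^w(x,D)\,U(t),\qquad U(0)=\Id,
\]
with the time-dependent symbol $\tau_t:=\sigma\circ\mathcal{A}_t^{-1}$. By Lemma \ref{lkh1} and the continuity of $t\mapsto\mathcal{A}_t$ in $Sp(d,\bR)$, the family $\{\tau_t\}$ remains bounded in $M^{\infty}_{1\otimes v_s}(\rdd)$, uniformly on compact intervals of $t$.

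The main step is then to show that $U(t)$ is itself a Weyl operator whose symbol lies in the same class. The key ingredient is that, for $s>2d$, the space $M^{\infty}_{1\otimes v_s}(\rdd)$ is a Banach algebra under the Weyl (twisted) product -- a weighted variant of Sj\"ostrand's theorem -- so that the associated Weyl operators form a Banach subalgebra of $\mathcal{L}(L^2(\rd))$. One then constructs $U(t)$ via the Dyson/Picard series
\[
U(t)=\Id+\sum_{n\geq 1}(-i)^n\int_{0\leq t_n\leq\cdots\leq t_1\leq t}\tau_{t_1}^w\cdots\tau_{t_n}^w\,dt_n\cdots dt_1,
\]
whose convergence in this algebra is guaranteed by the uniform symbol bound and by submultiplicativity of the Weyl product. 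The limit is a Weyl operator with symbol $b_{1,t}\in M^{\infty}_{1\otimes v_s}(\rdd)$, continuous in $t$ and equal to $\Id$ at $t=0$, yielding the factorization $e^{-it\tilde{H}}=\mu(\mathcal{A}_t)b_{1,t}^w(x,D)$. The symmetric decomposition $e^{-it\tilde{H}}=b_{2,t}^w(x,D)\mu(\mathcal{A}_t)$ then follows either by running the same argument with the conjugation applied from the other side, or, more directly, from part (i) of Theorem \ref{tc2} once membership in $FIO(\mathcal{A}_t,s)$ is established. Finally, boundedness on $M^p(\rd)$ for every $1\leq p\leq\infty$ is a direct consequence of the factorization: metaplectic operators preserve every modulation space $M^p$, and Weyl operators with symbols in $M^{\infty}_{1\otimes v_s}\subset M^{\infty,1}$ are bounded on $M^p$ by the classical modulation-space theory.

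The hardest point is clearly the Banach-algebra / convergence step: one needs the weighted Sj\"ostrand-type class $M^{\infty}_{1\otimes v_s}$ to be closed and submultiplicative under the Weyl product, with constants uniform in $t$, in order for the Dyson series to converge on arbitrary compact time intervals. Once this algebraic property is established and the norm-continuity of $t\mapsto\tau_t$ is verified, everything else amounts to packaging the standard fixed-point argument, as carried out in detail in \cite{EFsurvey,MetapWiener13}.
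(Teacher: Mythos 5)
Your proposal is essentially correct and follows the same strategy as the proof the paper is relying on: note that the paper itself does not prove Theorem \ref{teofinal} but quotes it as a simplified version of \cite[Theorem 5.1]{EFsurvey}, and the argument there (see also \cite{MetapWiener13}) is precisely the one you outline --- identify $e^{-ita^w(x,D)}$ with $\mu(\mathcal{A}_t)$, conjugate to reduce to $i\partial_t U=\tau_t^w(x,D)U$ with $\tau_t=\sigma\circ\mathcal{A}_t^{-1}$ bounded in $M^{\infty}_{1\otimes v_s}(\rdd)$ by Lemma \ref{lkh1}, and solve by Picard/Dyson iteration in the Banach algebra of Weyl operators with symbols in that class, the $M^p$-boundedness then following from the factorization \eqref{evolution}. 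The only caveats are minor: the submultiplicativity of the weighted Sj\"ostrand-type class under the twisted product for $s>2d$ and the (weak) continuity of $t\mapsto\tau_t$ needed for the series are exactly the technical points established in those references, so your sketch matches the intended proof rather than offering an alternative route.
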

The example \eqref{esempio} falls in this setting. Indeed, consider first the unperturbed problem
\begin{equation}\label{5.31eqU}
\begin{cases} i  \partial_t
 u =H_0 u,\\\
u(0,x)=u_0(x),
\end{cases}
\end{equation}
where $u_0\in \cS(\R^2)$ or in $M^p(\R^2)$, and $H_0= -\frac{1}{4\pi}\partial^2_{x_2} +\pi x_2^2 .$
In this case the propagator is a classical metaplectic operator and the solution is provided by
$$u(t,x_1,x_2)=e^{-i t H_0}u_0(x_1,x_2)=\mu(\A_t)u_0(x_1,x_2),$$
where the simplectic matrices $\A_t$ are defined in \eqref{esempio}. For details, we refer for instance to \cite[Section 4]{EFsurvey} or \cite[Chapter 4]{folland89}. Observe that the $2\times 2$ block in \eqref{esempiosub} is singular when $t= \pi/2+ k \pi$, $k\in\bZ$, (the   so-called caustic points).

We now consider the perturbed problem \eqref{5.31eqH}, where the  potential $V(x_1,x_2)$ is a multiplication operator and so a particular example of a pseudodifferential operator with symbol $\sigma(x_1,x_2,\xi_1,\xi_2)=V(x_1,x_2)\in M^\infty_{1\otimes v_s}(\R^4)$, $s>4$ (observe that $d=2$), which  satisfies the assumptions of Theorem \ref{teofinal}.
Indeed, we choose a window function $\Phi(x,\xi)=g_1(x) g_2(\xi)$, where $g_1,g_2\in\cS(\R^2)$.  The STFT of the symbol then splits as follows:  $$ V_\Phi \sigma(z_1,z_2,\zeta_1,\zeta_2)= V_{g_1} (V) (z_1,\zeta_1)V_{g_2} (1)(z_2,\zeta_2),\quad z_1,z_2,\zeta_1,\zeta_2\in\R^2.$$
 Using
 $\la (\zeta_1,\zeta_2)\ra\leq \la \zeta_1\ra \la \zeta_2\ra$ and the fact that  $1\in S^0_{0,0}\subset M^\infty_{1\otimes v_s}(\R^2)$, for every $s\geq0$, the claim follows.

Hence, the representation of the solution $u(t,x)$ of \eqref{5.31eq} is a generalized metaplectic operator applied to the initial datum $u_0$.  For  $t\not= \pi/2+ k \pi$, $k\in\bZ$, the representation of $u(t,x)$ is provided by the  type I FIO stated in \eqref{eq:kh22}, which in this case reads
$$u(t,x_1,x_2)=\int_{\R^2}e^{2\pi i (x_1\cdot \xi_1+(\sec t) x_2\cdot\xi_2)-\pi i (\tan t) (x_2^2+\xi_2^2)}b_t(x_1,x_2,\xi_1,\xi_2)\hat{u_0}(\xi_1,\xi_2)\,d\xi_1 d\xi_2,
$$
for suitable symbols $b_t\in M^{\infty}_{1\otimes v_{\mu+1}}(\bR^4)$.
We are interested in the caustic points $t= \pi/2+ k \pi$, $k\in\bZ$. The corresponding matrix in \eqref{esempio}  is
$$
\A = \begin{pmatrix}1 & 0 & 0 & 0\\0&0&0&1\\
0&0&1&0\\
0&-1&0&0\end{pmatrix}
\quad \mbox{with \, transpose}\quad
\A^T = \begin{pmatrix}1 & 0 & 0 & 0\\0&0&0&-1\\
0&0&1&0\\
0&1&0&0\end{pmatrix}.
$$
Applying Theorem \ref{main} for the transpose matrix $\A^T$, we observe that
in this case $A=A^T=\begin{pmatrix}1 & 0 \\ 0 & 0\end{pmatrix}.$
The range and the kernel of $A$ are given by $R(A)=R(A^T)=\{(\lambda,0),\, \lambda\in\R\}$ and $N(A)=\{ (0,\nu), \, \nu\in\R\}$. In this case $A^{inv}: R(A)\to R(A^T)$ is the identity mapping.  Observe that $D^T B= \begin{pmatrix}1 & 0 \\ 0 & 0\end{pmatrix}\begin{pmatrix}0 & 0 \\ 0 & -1\end{pmatrix}=\begin{pmatrix}0 & 0 \\ 0 & 0\end{pmatrix}$; if $y\in R(A)$, then $C y =0$ and,  for $x\in R(A)\oplus N(A)$, we have $x=(x_1,x_2)$, $x_1,x_2\in\R$.

Setting $T=u(\pi/2+ k \pi,\cdot)$, the integral representation in \eqref{eq:khee}, for a suitable symbol $b\in M^{\infty}_{1\otimes v_{\mu+1}}(\R^4)$, reduces in this case to
\begin{equation*}Tf(x_1,x_2)=\int_{\R} \int_{\R}e^{2\pi i (x_1 y+ x_2 \xi_2)} b((x_1,x_2),(y,\xi_2))(\cF_1 u_0)(y,-\xi_2) d\xi_2 dy
\end{equation*}
where $\cF_1 u_0(\xi_1,\xi_2)=\int_{\R} e^{-2\pi i \xi_1 t} u_0(t,\xi_2)\,dt$ is the one-dimensional Fourier transform of the initial datum $u_0$ restricted to the first variable $x_1$.

Finally, we observe that, if the symbol $b\equiv 1\in M^{\infty}_{1\otimes v_s}(\R^4)$, for every $s\geq0$, then the operator $T$ reduces to
$$Tu_0(x_1,x_2)=(\cF_2 u_0)(x_1,x_2)$$ the one-dimensional Fourier transform of  $u_0$ restricted to  the second variable $x_2$. This   example of fractional Fourier transform was already studied in \cite[Sec. 6.2]{MO2002}.
%
\section*{Acknowledgment}
We would like to thank Professor K.~Gr{\"o}chenig  for suggesting this field of research, in particular pointing out to our attention the paper \cite{MO2002}. We also thank him for  reading and suggesting improvements to a preliminary version of this work.

The first and the third author have been supported by the Gruppo
Nazionale per l'Analisi Matematica, la Probabilit\`a e le loro
Applicazioni (GNAMPA) of the Istituto Nazionale di Alta Matematica
(INdAM).

\end{document}